\newtheorem{theorem}{Theorem}
\newtheorem{defn}{Definition}
\newtheorem{lem}{Lemma}
\newtheorem{proposition}{Proposition}
\newtheorem{example}{Example}
\newtheorem{remark}{Remark}
\newcommand{\al}{\alpha}
\newcommand{\be}{\beta}
\newcommand{\CC}{\mathbb{C}} 
\newcommand{\PP}{\mathbb{P}} 
\newcommand{\QSym}{\operatorname{QSym}}
\newcommand{\std}{\operatorname{std}}
\newcommand{\rank}{\operatorname{rank}}
\newcommand{\Des}{\operatorname{Des}}
\newcommand{\Peak}{\operatorname{Peak}}
\providecommand*{\shuffle}{%
  \mathbin{\mathpalette\shuffle@{}}%
}
\newcommand*{\shuffle@}[2]{%
  \sbox0{$#1\vcenter{}$}%
  \kern .15\ht0 
  \rlap{\vrule height .25\ht0 depth 0pt width 2.5\ht0}%
  \raise.1\ht0\hbox to 2.5\ht0{%
    \vrule height 1.75\ht0 depth -.1\ht0 width .17\ht0 %
    \hfill
    \vrule height 1.75\ht0 depth -.1\ht0 width .17\ht0 %
    \hfill
    \vrule height 1.75\ht0 depth -.1\ht0 width .17\ht0 %
  }%
  \kern .15\ht0 
}
\title[Extended peaks]{The algebra of extended peaks}
\author[D. Grinberg \and E.A. Vassilieva]{Darij Grinberg\thanks{\href{mailto:darijgrinberg@gmail.com}{darijgrinberg@gmail.com}}\addressmark{1}, \and Ekaterina A. Vassilieva\thanks{\href{mailto:katya@lix.polytechnique.fr}{katya@lix.polytechnique.fr}}\addressmark{2}}
\abstract{Building up on our previous works regarding $q$-deformed $P$-partitions, we introduce a new family of subalgebras for the ring of quasisymmetric functions. Each of these subalgebras admits as a basis a $q$-analogue to Gessel's fundamental quasisymmetric functions where $q$ is equal to a complex root of unity. Interestingly, the basis elements are indexed by sets corresponding to an intermediary statistic between peak and descent sets of permutations that we call extended peak.}
\keywords{Quasisymmetric functions, descent set, peak set.}
\begin{document}

\maketitle
\section{Introduction}
In \cite{GriVas22}, we show that a $q$-deformation of the generating functions for $P$-partitions leads to a unified framework between classical and enriched $P$-partitions. In particular, we introduce our $q$-fundamental quasisymmetric functions that interpolate between I. Gessel's fundamental (\cite{Ges84}, $q=0$) and J. Stembridge's peak (\cite{Ste97}, $q=1$) quasisymmetric functions. When $q$ is not a root of unity, $q$-fundamentals are a basis of $\QSym$, the ring of quasisymmetric functions and are indexed by descent sets of permutations. If $q=1$, they span the subalgebra of $\QSym$ named the algebra of peaks. The relevant basis elements are those indexed by peak sets. As it turns out, using other complex roots of unity for $q$, we are able to build new intermediate subalgebras between the algebra of peaks and $\QSym$, the basis of which are $q$-fundamentals indexed by a new permutation statistic that lies between peak and descent sets. We call this statistic the extended peak set and the corresponding subalgebras of quasisymmetric functions the algebra of extended peaks. We begin with the required definitions and results from \cite{GriVas22}. Then we introduce and prove the new results regarding the algebra of extended peaks. 
\subsection{Permutation statistics}
Let $\PP$ be the set of positive integers. For $m, n \in \PP$, write $[m,n] = \{m, m+1, \dots, n\}$ and simply $[n] = \{1, 2, \dots, n\}$. We denote $S_n$ the symmetric group on $[n]$. Given $\pi \in S_n$, define its \emph{descent set} $$\Des(\pi)= \{1\leq i\leq n-1| \pi(i)>\pi(i+1)\} \subset [n-1]$$ and its \emph{peak set} $$\Peak(\pi) = \{2\leq i\leq n-1| \pi(i-1)<\pi(i)>\pi(i+1)\}.$$ The peak set of a permutation is said to be \emph{peak-lacunar}, i.e. it neither contains $1$ nor contains two consecutive integers.
\subsection{Enriched $P$-partitions and $q$-deformed generating functions}
\label{sec : poset}
We recall the main definitions regarding weighted posets, enriched $P$-partitions and their $q$-deformed generating functions. The reader is referred to \cite{Ges84, GriVas21, GriVas22, Sta01, Ste97} for more details.   
\begin{defn}[Labelled weighted poset, \cite{GriVas21}]
A \emph{labelled weighted poset} is a triple $P = ([n],<_P,\epsilon)$ where $([n], <_P)$ is a \emph{labelled poset}, i.e., an arbitrary partial order $ <_P$ on the set $[n]$ and $\epsilon : [n]\longrightarrow \PP$ is a map (called the \emph{weight function}).
\end{defn}
\noindent Each node of a labelled weighted poset is marked with its label and weight (Figure \ref{fig : poset}).
\begin{figure}[htbp]
\begin{center}
\begin{tikzcd}[row sep = small]
                                              &  & {2,\ \epsilon(2) = 5} &  &                                  &  &                       \\
{3,\ \epsilon(3) = 2} \arrow[rru,very thick] &  &                                   &  & {1,\ \epsilon(1) = 1} \arrow[llu,very thick] \arrow[rr,very thick] &  & {4,\ \epsilon(4) = 2} \\
                                              &  & {5,\ \epsilon(5)=2} \arrow[llu,very thick]  \arrow[rru,very thick]   &  &                                  &  &                      
\end{tikzcd}
\end{center}
\caption{A $5$-vertex labelled weighted poset. Arrows show the covering relations.}
 \label{fig : poset}
 \end{figure}
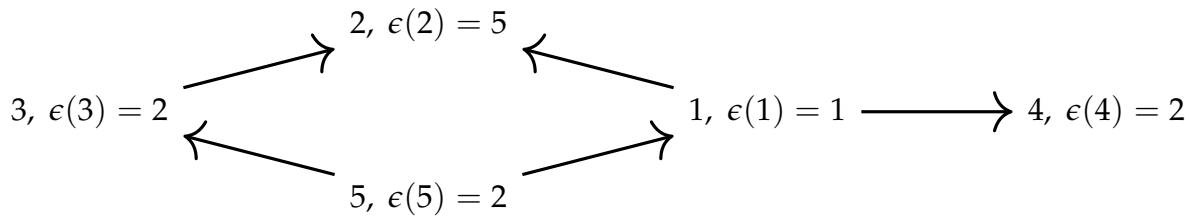
 \begin{defn}[Enriched $P$-partition, \cite{Ste97}]\label{def : enriched}
Let $\PP^{\pm}$ be the set of positive and negative integers totally ordered by $-1<1<-2<2<-3<3<\dots$. We embed $\PP$ into $\PP^{\pm}$ and let $-\PP \subseteq \PP^{\pm}$ be the set of all $-n$ for $n \in \PP$. Given a labelled weighted poset $P = ([n],<_P,\epsilon)$, an \emph{enriched $P$-partition} is a map $f: [n]\longrightarrow \PP^{\pm} $ that satisfies the two following conditions:
\begin{itemize}
\item[(i)] If $i <_P j$ and $i < j$, then $f(i) < f(j)$ or $f(i) = f(j) \in \PP$.
\item[(ii)] If $i <_P j$ and $i>j$, then $f(i) < f(j)$ or $f(i) = f(j) \in -\PP$.
\end{itemize}
We denote $\mathcal{L}_{\PP^{\pm}}(P)$ be the set of enriched $P$-partitions.
\end{defn} 
\begin{defn}[$q$-Deformed generating function, \cite{GriVas22}]
Consider the set of indeterminates $X = \left\{x_1,x_2,x_3,\ldots\right\}$, the ring $\CC \left[\left[ X \right]\right]$ of formal power series on $X$ where $\CC$ is the set of complex numbers, and let $q \in \CC$ be an additional parameter. Given a labelled weighted poset $([n], <_P, \epsilon)$, define its generating function $\Gamma^{(q)}([n], <_P, \epsilon) \in \CC \left[\left[ X \right]\right]$ as
\begin{equation*}
\label{eq : weightGamma}
\Gamma^{(q)}([n], <_P, \epsilon) = \sum_{f\in\mathcal{L}_{\PP^\pm}([n],<_{P}, \epsilon)} \prod_{1\leq i\leq n}q^{[f(i)<0]}x_{|f(i)|}^{\epsilon(i)},
\end{equation*}
where $[f(i)<0] = 1$ if $f(i)<0$ and $0$ otherwise.
\end{defn}

\subsection{Enriched $q$-monomial and $q$-fundamental quasisymmetric functions}
We state without proofs the required definitions and propositions from \cite{GriVas22}. The main building block of this previous work is the $q$-deformed generating function for enriched $P$-partitions on labelled weighted chains that we call \emph{universal quasisymmetric functions}. 
\begin{defn}[Universal quasisymmetric functions]
Given a \emph{composition}, i.e. a sequence of positive integers $\alpha = (\alpha_1, \alpha_2, \ldots, \alpha_n)$ with $n$ entries, and a permutation $\pi=\pi_1\dots\pi_n$ of $S_n$, we let $P_{\pi,\alpha} = ([n],<_\pi,\alpha)$ be the labelled weighted poset on the set $[n]$, where the order relation $<_\pi$ is such that $\pi_i <_\pi \pi_j$ if and only if $i < j$ and where $\alpha$ is the weight function sending the vertex labelled $\pi_i$ to $\alpha_i$ (see Figure \ref{fig : monomial}).
Define the \emph{$q$-universal quasisymmetric function}
\begin{equation*}
U^{(q)}_{\pi,\alpha} = \Gamma^{(q)}([n],<_\pi, \alpha).
\end{equation*}
\begin{figure}[htbp]
\begin{center}
\begin{tikzcd}[row sep = small]
{\pi_1,\ \alpha_1} \arrow[r, very thick] &
{\pi_2,\ \alpha_2} \arrow[r, very thick] &
\cdots\cdots\cdots \arrow[r, very thick] &
{\pi_n,\ \alpha_n}
\end{tikzcd}
\end{center}
\caption{The labelled weighted poset $P_{\pi,\alpha}$.}
\label{fig : monomial}
\end{figure}
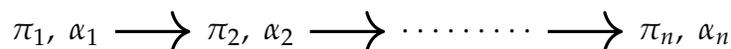
\end{defn}
\noindent Universal quasisymmetric functions belong to the subalgebra of $\CC \left[\left[ X \right]\right]$ called the ring of \emph{quasisymmetric functions ($\QSym$)}, i.e. for any strictly increasing sequence of indices $i_1 < i_2 <\cdots< i_p$ the coefficient of $x_1^{k_1}x_2^{k_2}\cdots x_p^{k_p}$ is equal to the coefficient of $x_{i_1}^{k_1}x_{i_2}^{k_2}\cdots x_{i_p}^{k_p}$. They are directly connected to classical bases of $\QSym$ as $L_{\pi}= U^{(0)}_{\pi,[1^n]}$ (resp. $K_{\pi}= U^{(1)}_{\pi,[1^n]}$) is the Gessel's  \emph{fundamental} (\cite{Ges84})  (resp. Stembridge's  \emph{peak}, \cite{Ste97}) quasisymmetric function indexed by $\pi$. Moreover, if we let $id_{n} = 1~2\dots n $ and $\overline{id_{n}} = n~n-1\dots 1$, then $U^{0}_{\overline{id_{n}},\alpha} = M_\al$ is the \emph{monomial}  (\cite{Ges84}), $U^{(0)}_{id_{n},\alpha} = E_\al$ the \emph{essential}  (\cite{Hof15})  and $U^{(1)}_{id_{n},\alpha} = \eta_\al$ the \emph{enriched monomial}  (\cite{Hsi07, GriVas21}) quasisymmetric functions indexed by $\al$. Moreover universal quasisymmetric functions satisfy the explicit expression
\begin{equation}
U_{\pi,\alpha}^{(q)}=\sum_{\substack{i_{1}\leq i_{2}\leq
\dots\leq i_{n};\\j\in\Peak(\pi)\Rightarrow 
i_{j-1}<i_{j+1}  }}q^{|\{j\in\Des(\pi
)|i_{j}=i_{j+1}\}|}(q+1)^{|\{i_{1},i_{2},\dots, i_{n}\}|}x_{i_{1}}^{\alpha_{1}%
}x_{i_{2}}^{\alpha_{2}}\dots x_{i_{n}}^{\alpha_{n}}.
\label{eq : Uqq}
\end{equation}
As universal quasisymmetric functions are the generating functions of some $P$-partitions on labelled chains, they admit the closed form product rule of Proposition \ref{prop : UUU}. 
\begin{proposition}[Product rule]
\label{prop : UUU}
Let $q \in \CC$, let $\pi$ and $\sigma$ be two permutations in $S_n$ and $S_m$, and let $\alpha = (\al_1,\dots,\al_n)$ and $\beta = (\be_1,\dots,\be_m)$ be two compositions with $n$ and $m$ entries.
The product of two $q$-universal quasisymmetric functions is given by
\begin{equation}
\label{eq : UU}
U^{(q)}_{\pi,\alpha}U^{(q)}_{\sigma,\beta}=\sum_{(\tau,\gamma)\in(\pi,\alpha)\shuffle(\sigma,\beta)}U^{(q)}_{\tau,\gamma} .
\end{equation}
Here $(\pi,\alpha) \shuffle (\sigma,\beta)$ denote the \emph{coshuffle} of $(\pi,\alpha)$ and $(\sigma,\beta)$, i.e. the set of pairs $(\tau,\gamma)$ where $\tau \in S_{n+m}$ is a shuffle of $\pi$ and $n+\sigma = (n+\sigma_1, \dots, n+\sigma_m)$, and $\gamma$ is a composition with $n+m$ entries, obtained by shuffling the entries of $\alpha$ and $\beta$ using the same shuffle used to build $\tau$. 
\end{proposition}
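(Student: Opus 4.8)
The plan is to prove the identity at the level of the underlying enriched $P$-partitions, using that $U^{(q)}_{\pi,\alpha} = \Gamma^{(q)}(P_{\pi,\alpha})$ is the generating function of a \emph{labelled weighted chain}. The one substantive input is the $q$-deformed, enriched analogue of the fundamental lemma of $P$-partition theory established in \cite{GriVas22}: for any labelled weighted poset $P = ([N], <_P, \epsilon)$,
$$\Gamma^{(q)}(P) = \sum_{w} \Gamma^{(q)}\big([N], <_w, \epsilon\big),$$
where $w = w_1 w_2 \cdots w_N$ runs over the linear extensions of $<_P$ (that is, permutations of $[N]$ with $w_i <_P w_j \Rightarrow i < j$) and $<_w$ denotes the total order $w_1 <_w w_2 <_w \cdots <_w w_N$. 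Everything else is a disjoint-union computation: I would realize the left-hand side as $\Gamma^{(q)}$ of a poset that is a disjoint union of two chains, and then read the right-hand side off from the description of its linear extensions.

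Concretely, first note that conditions (i)--(ii) of Definition \ref{def : enriched} involve the labels only through the comparisons $i < j$ and $i > j$, and the monomial $\prod_i q^{[f(i)<0]} x_{|f(i)|}^{\epsilon(i)}$ depends on the labels only through the weights $\epsilon(i)$; hence the order-preserving relabeling $[m] \to \{n+1,\dots,n+m\}$, $k \mapsto n+k$, carries $P_{\sigma,\beta}$ to a labelled weighted chain $Q'$ on $\{n+1,\dots,n+m\}$ with $\Gamma^{(q)}(Q') = U^{(q)}_{\sigma,\beta}$. Let $R = ([n+m], <_R, \epsilon)$ be the disjoint union of $P_{\pi,\alpha}$ and $Q'$, so that $<_R$ has no relations between the two blocks $[n]$ and $\{n+1,\dots,n+m\}$. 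An enriched $R$-partition $f \colon [n+m] \to \PP^{\pm}$ is then the same thing as a pair consisting of an enriched $P_{\pi,\alpha}$-partition $f|_{[n]}$ and an enriched $Q'$-partition $f|_{\{n+1,\dots,n+m\}}$, and its monomial factors as the product of the corresponding monomials; summing over $f$ and using distributivity gives $\Gamma^{(q)}(R) = U^{(q)}_{\pi,\alpha} U^{(q)}_{\sigma,\beta}$.

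It remains to expand $\Gamma^{(q)}(R)$ via the fundamental lemma. A linear extension of $R$ is a permutation $\tau = \tau_1 \cdots \tau_{n+m}$ of $[n+m]$ whose subword on the letters of $[n]$ reads $\pi_1 \cdots \pi_n$ and whose subword on the letters of $\{n+1,\dots,n+m\}$ reads $(n+\sigma_1)\cdots(n+\sigma_m)$; these are exactly the shuffles of $\pi$ and $n+\sigma$. For such a $\tau$, the weight of the vertex labelled $\tau_k$ is $\alpha_i$ when $\tau_k = \pi_i$ and $\beta_j$ when $\tau_k = n+\sigma_j$, so the composition $\gamma = (\epsilon(\tau_1),\dots,\epsilon(\tau_{n+m}))$ is precisely the shuffle of $\alpha$ and $\beta$ along the same shuffle; thus $(\tau,\gamma)$ ranges over $(\pi,\alpha)\shuffle(\sigma,\beta)$, and each summand $\Gamma^{(q)}([n+m],<_\tau,\gamma)$ equals $U^{(q)}_{\tau,\gamma}$. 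Combining with the previous paragraph yields the product rule. The only non-formal step is the fundamental lemma itself: a self-contained proof would partition $\mathcal{L}_{\PP^{\pm}}(R)$ according to how an enriched $R$-partition $f$ orders the labels of $[n+m]$ (sort them by the value $f(i) \in \PP^{\pm}$, breaking ties so that equal positive values list the smaller label first and equal negative values list the larger label first), and check that each block is exactly the set of enriched partitions of the associated weighted chain while the factor $\prod_i q^{[f(i)<0]}$ is untouched. That tie-breaking bookkeeping is where I expect the real work to lie, though it proceeds exactly as in Stembridge's classical argument, precisely because the $q$-weight is insensitive to the poset structure. One could instead expand the product of the two explicit series recalled above and match monomials through a merge bijection on the index tuples, but tracking the peak conditions, the $q$-exponent counting coincidences $i_j = i_{j+1}$, and the factor $(q+1)^{|\{\text{distinct values}\}|}$ under merging is considerably more laborious than the conceptual route above.
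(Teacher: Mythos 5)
Your proof is correct and follows exactly the route the paper itself indicates (it states the proposition without proof, citing \cite{GriVas22}, and justifies it in one line by the fact that $U^{(q)}_{\pi,\alpha}$ is the generating function of enriched $P$-partitions on a labelled weighted chain): realize the product as $\Gamma^{(q)}$ of the disjoint union of two chains and apply the $q$-deformed fundamental lemma, whose linear extensions are precisely the shuffles. You also correctly isolate the fundamental lemma as the only substantive input and note that the $q$-weight is insensitive to the poset structure, which is exactly why the classical Stembridge-style argument carries over.
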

\begin{remark}[Coproduct]
It's easy to notice that universal quasisymmetric functions admit a coproduct $\Delta:\operatorname*{QSym}\rightarrow
\operatorname*{QSym}\otimes\operatorname*{QSym}$ of the Hopf algebra
$\operatorname*{QSym}$ (see \cite[\S 5.1]{GriRei20}). Let $n \in \PP$, $\pi \in S_n$ and $\alpha$ be a composition with $n$ entries.
\begin{equation*}
\Delta(U^{(q)}_{\pi, \alpha})
= \sum_{i=0}^n U^{(q)}_{\std(\pi_1\pi_2\dots \pi_i), (\al_1,\al_2,\dots,\al_i)} \otimes
U^{(q)}_{\std(\pi_{i+1}\pi_{i+2}\dots \pi_n), (\al_{i+1},\al_{i+2},\dots,\al_n)}.
\end{equation*}
Here, if $\gamma$ is a sequence of non repeating integers, $\std(\gamma)$ is the permutation whose values are in the same relative order as the entries of $\gamma$.
\end{remark}
Our work relies on two significant specialisations of universal quasisymmetric functions called enriched $q$-monomial and $q$-fundamental quasisymmetric functions.  
\begin{defn}[Enriched $q$-monomial quasisymmetric functions]
\label{def : EQ}
Let $q \in \CC$ and $\al$ be a composition with $n$ entries. The \emph{enriched $q$-monomial} indexed by $\al$ is defined as
\begin{equation*}
\label{eq : EU}
\eta^{(q)}_\al = U^{(q)}_{id_n,\alpha}= \sum_{i_1\leq i_2 \leq \dots \leq i_n}(q+1)^{|\{i_{1},i_{2},\dots, i_{n}\}|}x_{i_{1}}^{\alpha_{1}}x_{i_{2}}^{\alpha_{2}}\dots x_{i_{n}}^{\alpha_{n}}.
\end{equation*}
As compositions $\al = (\al_1,\dots,\al_n)$ such that $\al_1 + \dots + \al_n = s$ are in bijection with subsets of $[s-1]$, we also use the equivalent expression for $I \subseteq [s-1]$.
\begin{equation*}
\eta^{(q)}_{s, I} =\sum_{\substack{i_1\leq\dots\leq i_s\\ j \in I \Rightarrow i_j=i_{j+1}}}(q+1)^{|\{i_1,\dots,i_s\}|}x_{i_1}\dots x_{i_s}.
\end{equation*}
\end{defn}
As an immediate consequence of Definition \ref{def : EQ}, $\eta^{(0)}_\al$ (resp. $\eta^{(1)}_\al$)  is Hoffman's essential (\cite{Hof15})  (resp. enriched monomial, \cite{GriVas21}) quasisymmetric function indexed by the composition $\alpha$. Except the degenerate case $q=-1$, $q$-enriched monomials are a basis of $\QSym$. 
\begin{proposition}
Let $q \in \CC \setminus \{-1\}$. The family $\left(\eta^{(q)}_{s,I} \right )_{s\geq0, I\subseteq[s-1]}$ is a basis of $\QSym$.
\end{proposition}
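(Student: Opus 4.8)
The plan is to reduce everything to a statement about the monomial quasisymmetric functions $M_\gamma$, which are known to form a basis of $\QSym$. Since each $\eta^{(q)}_{s,I}$ is homogeneous of degree $s$, and $\QSym$ is graded with its degree-$s$ component spanned by the $M_\gamma$ for $\gamma$ a composition of $s$ --- equivalently, via the standard bijection $B \mapsto \Comp(B)$, by the family $\bigl(M_{\Comp(B)}\bigr)_{B \subseteq [s-1]}$ --- it suffices to prove that for each fixed $s$ the family $\bigl(\eta^{(q)}_{s,I}\bigr)_{I \subseteq [s-1]}$ is a basis of the degree-$s$ component of $\QSym$. I would establish this by writing down the transition matrix to the monomial basis and checking that it is invertible as soon as $q \neq -1$.

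First I would expand $\eta^{(q)}_{s,I}$ in the monomial basis. Starting from the formula for $\eta^{(q)}_{s,I}$ in Definition~\ref{def : EQ}, I would sort the weakly increasing sequences $i_1 \le \dots \le i_s$ in the defining sum by their set of strict ascents $B = \{\, j \in [s-1] : i_j < i_{j+1} \,\}$. The constraint ``$j \in I \Rightarrow i_j = i_{j+1}$'' is equivalent to $B \cap I = \emptyset$; and for a fixed admissible $B$, the monomials $x_{i_1}\cdots x_{i_s}$ arising from sequences with ascent set $B$ are precisely the monomials of $M_{\Comp(B)}$, each weighted by $(q+1)^{|\{i_1,\dots,i_s\}|} = (q+1)^{|B|+1}$. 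This yields
\[
\eta^{(q)}_{s,I} \;=\; \sum_{B \,\subseteq\, [s-1]\setminus I} (q+1)^{|B|+1}\, M_{\Comp(B)} .
\]

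Next I would use the triangular shape of this expansion. Among the sets $B$ that occur on the right-hand side there is a unique $\subseteq$-maximal one, namely $B = [s-1]\setminus I$; writing $I^{c} := [s-1]\setminus I$ we thus have $\eta^{(q)}_{s,I} = (q+1)^{|I^{c}|+1}\, M_{\Comp(I^{c})} + \bigl(\text{a combination of } M_{\Comp(B)} \text{ with } B \subsetneq I^{c}\bigr)$. Because $I \mapsto I^{c}$ is a bijection of the index set $2^{[s-1]}$, reindexing the $\eta$'s by $I^{c}$ turns the transition matrix to $\bigl(M_{\Comp(B)}\bigr)_B$ into a matrix that is triangular with respect to any linear extension of the inclusion order on $2^{[s-1]}$, with diagonal entries $(q+1)^{|I^{c}|+1}$. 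Since $q \neq -1$ all of these are nonzero, the matrix is invertible, and hence $\bigl(\eta^{(q)}_{s,I}\bigr)_{I \subseteq [s-1]}$ is a basis of the degree-$s$ component of $\QSym$; taking the union over all $s \ge 0$ completes the argument.

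I expect the only genuine subtlety to be the bookkeeping in this last step: the expansion is \emph{not} triangular in the naive ordering of the indexing sets $I$, and one must pass to the complements $I^{c}$ --- equivalently, notice that the ``leading'' monomial of $\eta^{(q)}_{s,I}$ is indexed by $I^{c}$ rather than by $I$ --- before the triangular structure becomes visible. Everything else is a routine regrouping of the sum in Definition~\ref{def : EQ} together with the standard fact that the monomial quasisymmetric functions form a basis of $\QSym$.
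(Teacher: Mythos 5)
Your proof is correct, and since the paper imports this proposition from \cite{GriVas22} without proof, there is nothing in the present text to diverge from: the expansion $\eta^{(q)}_{s,I}=\sum_{B\subseteq [s-1]\setminus I}(q+1)^{|B|+1}M_{\Comp(B)}$ together with triangularity (with nonzero diagonal $(q+1)^{|I^c|+1}$) in the inclusion order on complements is exactly the standard argument used in the cited source. The one point you rightly flag --- that the leading monomial term is indexed by $I^c$ rather than $I$ --- is handled correctly.
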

\begin{defn}[$q$-Fundamental quasisymmetric functions]
Let $\pi$ be a permutation in $S_n$ and $q \in \CC$. Define the \emph{$q$-fundamental quasisymmetric function} indexed by $\pi$ as
\begin{equation*}
L_{\pi}^{(q)} = U^{(q)}_{\pi, [1^{n}]}.
\end{equation*}
According to Equation (\ref{eq : Uqq}), $L_{\pi}^{(q)}$ depends only on the descent set of $\pi$. As a result, $q$-fundamentals are naturally indexed by sets and we may denote them  $(L_{n, I}^{(q)})_{I \subseteq [n-1]}$. 
\end{defn}
The specialisations of $L_{\pi}^{(q)}$ to $q=0$ and $q=1$ are respectively the Gessel's fundamental \cite{Ges84} and Stembridge's peak \cite{Ste97} quasisymmetric functions indexed by permutation $\pi$. The expression of $q$-fundamentals in the basis of enriched $q$-monomials is of particular importance. 
\begin{proposition} Let $I \subseteq[n-1]$ and $q \in \CC$. Let also $\Peak(I) = I\setminus (I-1)\setminus \{1\}$. The $q$-fundamental quasisymmetric functions may be expressed in the enriched $q$-monomial basis as
\begin{equation}
\label{eq : LEq}
L_{n, I}^{(q)}
= \sum_{\substack{J \subseteq I\\K \subseteq \Peak(I)\\J \cap K = \emptyset}}
(-q)^{|K|}(q-1)^{|J|}\eta^{(q)}_{n, J\cup (K-1) \cup K}.
\end{equation}
\end{proposition}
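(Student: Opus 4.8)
The plan is to expand both sides of identity~(\ref{eq : LEq}) in the basis $\left(M_\alpha\right)$ of monomial quasisymmetric functions and to compare coefficients. Applying the explicit formula~(\ref{eq : Uqq}) to a permutation $\pi$ with $\Des(\pi) = I$ and weight $[1^n]$ (so that $\Peak(\pi) = \Peak(I)$ by definition), one obtains
\[
L^{(q)}_{n,I} = \sum_{\substack{i_1 \leq i_2 \leq \dots \leq i_n \\ j \in \Peak(I)\ \Rightarrow\ i_{j-1} < i_{j+1}}} q^{|\{\, j \in I\ :\ i_j = i_{j+1}\,\}|}\,(q+1)^{|\{i_1, \dots, i_n\}|}\, x_{i_1} x_{i_2} \dots x_{i_n}.
\]
First I would sort the weakly increasing sequences appearing here according to their \emph{equality set} $E := \{\, j \in [n-1]\ :\ i_j = i_{j+1}\,\}$. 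For a fixed $E$, the sum of the monomials $x_{i_1}\dots x_{i_n}$ over all sequences with equality set exactly $E$ equals $M_{\alpha(E)}$, where $\alpha(E)$ is the composition of $n$ whose set of partial sums (the total $n$ excluded) is $[n-1]\setminus E$; moreover every such sequence has $|\{i_1,\dots,i_n\}| = n - |E|$ and $\{\, j \in I : i_j = i_{j+1}\,\} = I \cap E$. Since the $i_k$ are weakly increasing and every $j \in \Peak(I)$ satisfies $2 \leq j \leq n-1$ (because $\Peak(I)$ is peak-lacunar and contained in $[n-1]$), the requirement $i_{j-1} < i_{j+1}$ is equivalent to forbidding $\{j-1,j\} \subseteq E$. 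Hence
\[
L^{(q)}_{n,I} = \sum_{\substack{E \subseteq [n-1] \\ \{j-1,j\}\ \not\subseteq\ E\ \text{ for all } j \in \Peak(I)}} q^{|I \cap E|}\,(q+1)^{n-|E|}\, M_{\alpha(E)}.
\]

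Next I would expand the right-hand side of~(\ref{eq : LEq}). The same sorting applied to Definition~\ref{def : EQ} gives, for every $A \subseteq [n-1]$, the expansion $\eta^{(q)}_{n,A} = \sum_{A \subseteq E \subseteq [n-1]} (q+1)^{n-|E|}\, M_{\alpha(E)}$. Substituting this into the right-hand side of~(\ref{eq : LEq}) and interchanging the order of summation, the coefficient of $M_{\alpha(E)}$ becomes $(q+1)^{n-|E|}$ times
\[
\Sigma_E := \sum_{\substack{J \subseteq I \cap E,\ \ K \subseteq \Peak(I) \\ K \cup (K-1) \subseteq E,\ \ J \cap K = \emptyset}} (-q)^{|K|}\,(q-1)^{|J|}.
\]
Since $E \mapsto \alpha(E)$ is a bijection from the subsets of $[n-1]$ onto the compositions of $n$, and the $M_\alpha$ are linearly independent, identity~(\ref{eq : LEq}) will follow — after cancelling the nonzero polynomial $(q+1)^{n-|E|}$, which is legitimate if one regards $q$ as a formal indeterminate and specializes at the end — once I establish the purely combinatorial identity: for every $E \subseteq [n-1]$,
\[
\Sigma_E = q^{|I \cap E|}\cdot\big[\, \{j-1,j\}\ \not\subseteq\ E\ \text{ for all } j \in \Peak(I)\,\big].
\]

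Finally I would prove this by a short direct computation. Because $\Peak(I) \subseteq I$, the constraints defining $\Sigma_E$ force $K$ to lie in $P' := \{\, j \in \Peak(I)\ :\ j-1 \in E \text{ and } j \in E\,\}$, and $P' \subseteq I \cap E$. Carrying out the inner sum over $J \subseteq (I\cap E)\setminus K$ first, the binomial theorem gives $\sum_{J \subseteq (I\cap E)\setminus K}(q-1)^{|J|} = q^{|I\cap E| - |K|}$, and since $(-q)^{|K|}\,q^{|I\cap E| - |K|} = (-1)^{|K|}\,q^{|I\cap E|}$ we get
\[
\Sigma_E = q^{|I\cap E|}\sum_{K \subseteq P'} (-1)^{|K|} = q^{|I\cap E|}\cdot\big[\, P' = \emptyset\,\big],
\]
by the vanishing of the alternating sum of binomial coefficients over a nonempty set. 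Since $P' = \emptyset$ is exactly the peak condition appearing in the expansion of $L^{(q)}_{n,I}$, this completes the proof. I do not anticipate a genuine obstacle: the argument is essentially bookkeeping, and its one substantive step — the reduction to the combinatorial identity for $\Sigma_E$ — is settled by the two elementary summations just used. The points that most need care are the translation of the constraint $i_{j-1} < i_{j+1}$ into $\{j-1,j\}\not\subseteq E$ (which relies on $2 \leq j \leq n-1$), the bijectivity of $E \mapsto \alpha(E)$ together with the linear independence of the $M_\alpha$, and the inclusion $\Peak(I) \subseteq I$ that yields $P' \subseteq I\cap E$ and thereby allows the binomial simplification to go through.
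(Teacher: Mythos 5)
Your argument is correct. Note first that the paper itself gives no proof of this proposition --- it is imported from \cite{GriVas22} without demonstration --- so your monomial-expansion derivation is a genuine, self-contained verification rather than a reproduction. The bookkeeping all checks out: grouping the weakly increasing sequences in Equation~(\ref{eq : Uqq}) by their equality set $E$ correctly yields $M_{\alpha(E)}$ with the stated coefficient; the expansion $\eta^{(q)}_{n,A}=\sum_{A\subseteq E}(q+1)^{n-|E|}M_{\alpha(E)}$ is right; and the two elementary summations $\sum_{J\subseteq S}(q-1)^{|J|}=q^{|S|}$ and $\sum_{K\subseteq P'}(-1)^{|K|}=[P'=\emptyset]$ close the computation, with $P'=\emptyset$ matching exactly the peak condition $\{j-1,j\}\not\subseteq E$. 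One small remark: the ``cancellation of $(q+1)^{n-|E|}$'' you worry about is not actually needed, since you prove $\Sigma_E=q^{|I\cap E|}[\,\cdots]$ directly and then multiply back, so the identity holds for all $q\in\CC$ including $q=-1$ without any specialization argument. A point worth making explicit: the paper's written formula $\Peak(I)=I\setminus(I-1)\setminus\{1\}$, read literally with $I-1=\{i-1: i\in I\}$, gives $\{i\in I: i+1\notin I\}\setminus\{1\}$, which does \emph{not} equal $\Peak(\pi)$ for $\Des(\pi)=I$ (try $I=\{1,2\}$) and would make the stated identity false, as one can check against the column $\{2,1\}$ of $B_4^{(q)}$. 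The intended set, and the one your proof uses, is $\{i\in I: i\geq 2,\ i-1\notin I\}=I\setminus(I+1)\setminus\{1\}$; you silently adopt the correct reading via ``$\Peak(\pi)=\Peak(I)$ by definition,'' which is the right call, but it deserves a sentence of justification since it is what makes the constraint sets on the two sides of the identity line up.
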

Unlike $q$-monomials, $q$-fundamentals are not always a basis of $\QSym$.
\begin{proposition}\label{thm : basis} $(L_{n, I}^{(q)})_{n\geq0, I\subseteq[n-1]}$ is a basis of $\QSym$ if and only if $q \in \CC$ is not a root of unity.
\end{proposition}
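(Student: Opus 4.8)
Gessel's fundamental quasisymmetric functions $L_{\pi}=L^{(0)}_{\pi}=U^{(0)}_{\pi,[1^{n}]}$ depend only on $\Des(\pi)$ and form a basis $(L_{n,I})_{I\subseteq[n-1]}$ of $\QSym$, so there is a well-defined graded linear endomorphism $\phi_{q}$ of $\QSym$ with $\phi_{q}(L_{n,I})=L^{(q)}_{n,I}$; the family $(L^{(q)}_{n,I})_{n,I}$ is a basis of $\QSym$ precisely when $\phi_{q}$ is invertible. The plan is to show $\phi_{q}$ is a graded Hopf-algebra endomorphism and then to detect its invertibility from a single scalar $\lambda_{n}$ in each degree.

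\textbf{$\phi_{q}$ is a Hopf-algebra endomorphism.} Taking all weights equal to $1$ in Proposition \ref{prop : UUU} and collecting terms by descent set gives, for every $q$,
\[
L^{(q)}_{\pi}L^{(q)}_{\sigma}=\sum_{I}c_{I}\,L^{(q)}_{n+m,I},
\]
where $c_{I}$ is the number of shuffles $\tau$ of $\pi$ and $n+\sigma$ with $\Des(\tau)=I$ — the same coefficients for all $q$. Similarly, the coproduct formula in the Remark reads $\Delta(L^{(q)}_{n,I})=\sum_{i}L^{(q)}_{\std(\dots)}\otimes L^{(q)}_{\std(\dots)}$ with the standardizations and subsets appearing on the right not depending on $q$. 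Comparing with the case $q=0$, $\phi_{q}$ preserves the product and the coproduct (and, being graded, the unit and counit), hence is a graded Hopf-algebra endomorphism of $\QSym$.

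\textbf{Invertibility via indecomposables.} Dualizing, $\phi_{q}^{*}$ is a graded Hopf-algebra endomorphism of $\operatorname{NSym}$, which is the free associative algebra on generators $H_{1},H_{2},\dots$ with $H_{n}$ in degree $n$ (see \cite{GriRei20}). A graded algebra endomorphism of such a free algebra is invertible if and only if for every $n$ the scalar $\lambda_{n}$ defined by $\phi_{q}^{*}(H_{n})\equiv\lambda_{n}H_{n}$ modulo products of lower-degree generators is nonzero. So $\phi_{q}$ is invertible iff $\lambda_{n}\neq0$ for all $n\geq1$, and by duality $\lambda_{n}$ equals the coefficient of the monomial quasisymmetric function $M_{(n)}$ in $\phi_{q}(M_{(n)})$.

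\textbf{Computing $\lambda_{n}$ and concluding.} By the classical expansion $M_{(n)}=\sum_{I\subseteq[n-1]}(-1)^{|I|}L_{n,I}$ (M\"obius-inverting Gessel's formula $L_{n,I}=\sum_{J\supseteq I}M_{n,J}$ over the Boolean lattice), we have $\phi_{q}(M_{(n)})=\sum_{I}(-1)^{|I|}L^{(q)}_{n,I}$. In the expansion (\ref{eq : Uqq}) with $\alpha=[1^{n}]$, the monomials contributing to $M_{(n)}$ are exactly the constant sequences $i_{1}=\dots=i_{n}$; for a permutation $\pi$ with $\Des(\pi)=I$ these are admissible precisely when $\Peak(\pi)=\emptyset$, in which case their total contribution is $q^{|I|}(q+1)M_{(n)}$. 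Since $\Peak(\pi)=\emptyset$ for $\Des(\pi)=I$ exactly when $I$ is one of the $n$ initial intervals $\emptyset,\{1\},\{1,2\},\dots,\{1,\dots,n-1\}$ (of sizes $0,1,\dots,n-1$),
\[
\lambda_{n}=(q+1)\sum_{k=0}^{n-1}(-q)^{k}=1-(-q)^{n}.
\]
Hence $\phi_{q}$ is invertible iff $(-q)^{n}\neq1$ for every $n\geq1$, i.e.\ iff $q$ is not a root of unity, as claimed. The step I expect to need the most care is this last one — pinning down which constant sequences are admissible (those with $\Peak(\pi)=\emptyset$) and their common weight $q^{|I|}(q+1)$, and correctly invoking the free-algebra invertibility criterion; the Hopf-structural input is already in hand from Proposition \ref{prop : UUU} and the Remark.
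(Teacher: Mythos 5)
Your argument is correct, and it takes a genuinely different route from the paper's: the paper states this proposition without proof, importing it from \cite{GriVas22}, where (as Section \ref{section.dimension} of the present paper echoes) the result is obtained by analysing the transition matrix $B_n^{(q)}$ between $(L^{(q)}_{n,I})_I$ and $(\eta^{(q)}_{n,J})_J$ through its recursive block-triangular structure, so that invertibility in every degree is equivalent to $q$ not being a root of unity. You instead observe that $L_{n,I}\mapsto L^{(q)}_{n,I}$ is a graded Hopf-algebra endomorphism $\phi_q$ of $\QSym$ (the structure constants coming from Proposition \ref{prop : UUU} and from the coproduct remark are independent of $q$), dualize to an algebra endomorphism of the free algebra $\operatorname{NSym}$, and reduce everything to the single scalar $\lambda_n$ on the indecomposables. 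I checked the key computation and it is right: by (\ref{eq : Uqq}) the coefficient of $x_1^n$ in $L^{(q)}_{n,I}$ is $q^{|I|}(q+1)$ when $I$ is an initial interval (equivalently $\Peak=\emptyset$) and $0$ otherwise, and combined with $M_{(n)}=\sum_{I\subseteq[n-1]}(-1)^{|I|}L_{n,I}$ this gives $\lambda_n=(q+1)\sum_{k=0}^{n-1}(-q)^k=1-(-q)^n$, which vanishes for some $n$ exactly when $q$ is a root of unity (the case $q=-1$ is covered since the identity holds as polynomials). Your approach buys a lot: only one matrix entry per degree is needed rather than the full transition matrix. Two points should be made explicit to make it airtight: only the coalgebra-morphism half of your structural claim is actually used (it is what dualizes to an algebra map of $\operatorname{NSym}$), and the criterion that a graded algebra endomorphism of a connected free graded algebra is invertible iff it acts invertibly on the indecomposables $\operatorname{NSym}_+/(\operatorname{NSym}_+)^2$ deserves a one-line justification (surjectivity by induction on degree, then bijectivity from the finite-dimensionality of the graded components) or a pointer to \cite{GriRei20}.
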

\section{Extended peaks}
According to Proposition \ref{thm : basis}, $q$-fundamental quasisymmetric functions are very similar to classical ones when $q$ is not a root of unity and are naturally indexed by descent sets. When $q=1$, they reduce to peak quasisymmetric functions, are indexed by peak sets and span a very significant subalgebra of $\QSym$. Understanding what subalgebra they span and how to index them when $q$ is another root of unity appears as a natural question. We begin with the introduction of the appropriate sets. 
\subsection{Extended peak sets}
We use the following subsets with constraints on consecutive elements.
\begin{defn}[Extended peak set]
Let $n$ and $p$ be two positive integers. We say that $I\subseteq [n-1]$ is a \emph{$p$-extended peak set} if $I \cup \{0\}$ doesn't contain more than $p$ consecutive elements (as a result, $[1, p] \nsubseteq I$). 
We write $I \subseteq_p [n-1]$ for this statement.
\end{defn}
\begin{example}
Set $n=9$. One has $\{4,8\} \subseteq_1 [8]$, $\{1, 4,5, 8\} \subseteq_2 [8]$, $\{1,2,4,5,6,8\} \subseteq_3 [8]$. However $\{1,2, 4,5, 8\} \nsubseteq_2 [8]$ as the subsequence $[1,2]$ containing $1$ has size $2 > p-1 = 1$.
\end{example}
\begin{remark}[Permutation statistics] Extended peak sets look like an intermediary statistic between peak and descent sets. Any peak set is a $1$-extended peak set and any descent set on permutations of $n$ elements is a $p$-extended peak set for $p\geq n$. Moreover, given a permutation $\pi$ in $S_n$ and an integer $p \geq 1$ one may define ${\Peak}_p(\pi)$ as 
\begin{align*}
{\Peak}_p(\pi) = \{i \in \Des(\pi) | i \leq p - 1 \mbox{ or } \exists \; 1 \leq j \leq p \mbox{ such that } i-j \notin \Des(\pi)\}.
\end{align*}
On the one hand one has always $\Peak(\pi) = {\Peak}_1(\pi) \subseteq {\Peak}_p(\pi) \subseteq \Des(\pi)$. On the other hand, ${\Peak}_p(\pi) \subseteq_p [n-1]$. For instance let $\pi = 54163287$. We have ${\Peak}_1(\pi) = \Peak(\pi) = \{4,7\} \subseteq {\Peak}_2(\pi)  = \{1,4,5,7\} \subseteq {\Peak}_3(\pi)  = \{1,2,4,5,7\} = \Des(\pi)$.
\end{remark}
We count the number of extended peak sets.
\begin{proposition}
Let $n,p \in \PP$. Denote $s^{(p)}_n$ be the number of $p$-extended peak sets on $n$ elements. Extend the definition with $s^{(p)}_0 = 0$ for all positive $p$. 
One has
\begin{equation}
s^{(p)}_n = 
\begin{cases}
2^{n-1}\; \mbox{ if } \; n \leq p\\
\sum_{k=0}^{p} s^{(p)}_{n-k-1} = \sum_{k=1}^{p+1} s^{(p)}_{n-k}\; \mbox{ if } \; n > p\\
\end{cases}
\end{equation}
\end{proposition}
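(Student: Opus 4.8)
The plan is to translate $p$-extended peak sets into binary words avoiding a fixed factor, compute their ordinary generating function by the standard maximal-run decomposition, and then read off the recursion by clearing a denominator. The translation I would use: for $n\ge 1$, a subset $I\subseteq[n-1]$ corresponds to the binary word $w(I)=w_0w_1\cdots w_{n-1}$ of length $n$ with $w_0=1$ and $w_i=[i\in I]$ for $1\le i\le n-1$. Then ``$I\cup\{0\}$ contains no $p+1$ consecutive integers'' is exactly ``$w(I)$ contains no factor $1^{p+1}$'' (no block of $p+1$ consecutive $1$'s). Hence $s^{(p)}_n$ is the number of binary words of length $n$ that start with $1$ and avoid the factor $1^{p+1}$, and $s^{(p)}_0=0$ by convention.

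Next I would compute $S(t):=\sum_{n\ge 0}s^{(p)}_n t^n=\sum_{n\ge 1}s^{(p)}_n t^n$. Every nonempty binary word starting with $1$ and avoiding $1^{p+1}$ factors uniquely into its maximal runs: it begins with a $1$-run of length in $\{1,\dots,p\}$, then alternates between $0$-runs (length $\ge 1$) and $1$-runs (length in $\{1,\dots,p\}$), and ends with a run of either kind. Writing $\mu(t)=t+t^2+\cdots+t^p$ for a $1$-run and $\nu(t)=t+t^2+\cdots=\tfrac{t}{1-t}$ for a $0$-run, this decomposition yields
\[
S(t)=\mu(t)\,\bigl(1+\nu(t)\bigr)\,\frac{1}{1-\mu(t)\nu(t)}.
\]
Using $\mu(t)=\tfrac{t-t^{p+1}}{1-t}$, $1+\nu(t)=\tfrac{1}{1-t}$ and $1-\mu(t)\nu(t)=\tfrac{1-2t+t^{p+2}}{(1-t)^2}$, this collapses to $S(t)=\dfrac{t-t^{p+1}}{1-2t+t^{p+2}}$.

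Since $(1-t)(t+t^2+\cdots+t^p)=t-t^{p+1}$ and $(1-t)(1-t-t^2-\cdots-t^{p+1})=1-2t+t^{p+2}$ (both telescope), cancelling $1-t$ gives
\[
\bigl(1-t-t^2-\cdots-t^{p+1}\bigr)\,S(t)=t+t^2+\cdots+t^p .
\]
Extracting the coefficient of $t^n$ (with $s^{(p)}_0=0$ and $s^{(p)}_j=0$ for $j<0$): for $n>p$ the right side contributes $0$, so $s^{(p)}_n=\sum_{k=1}^{p+1}s^{(p)}_{n-k}=\sum_{k=0}^{p}s^{(p)}_{n-k-1}$, which is the second case of the claim; for $1\le n\le p$ the right side contributes $1$, so $s^{(p)}_n=1+\sum_{j=0}^{n-1}s^{(p)}_j$, whence $s^{(p)}_1=1$ and $s^{(p)}_n-s^{(p)}_{n-1}=s^{(p)}_{n-1}$ for $2\le n\le p$, giving $s^{(p)}_n=2^{n-1}$ by induction. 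This is precisely the stated formula.

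The only genuinely delicate point is the run-decomposition: one must check that the one-run words $1^a$ and the words ending in a $0$-run are each counted exactly once, which is exactly what the bracket $1+\nu(t)$ encodes, and that the resulting rational function simplifies as stated. One could instead proceed combinatorially, decomposing $I$ at its first gap $j=\min\{i\ge 1: i\notin I\}$ (which satisfies $j\le p$ because $[1,p]\not\subseteq I$) and counting the $(p+1)$-run-free remainder on $[j+1,n-1]$; this also works but forces the introduction of the auxiliary count of arbitrary $(p+1)$-run-free subsets together with a couple of boundary conventions, all of which the generating-function computation keeps implicit.
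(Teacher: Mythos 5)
Your proof is correct, but it takes a genuinely different route from the paper's. The paper argues by a direct bijection: for $n \le p$ every subset of $[n-1]$ is a $p$-extended peak set, since a run of $p+1$ consecutive elements cannot fit inside $\{0\}\cup[n-1]$; and for $n>p$ it strips from $I$ the maximal run $[n-k,n-1]$ of consecutive elements of $I$ containing $n-1$ (with $k=0$ if $n-1\notin I$, and necessarily $k\le p$), leaving a $p$-extended peak set on $[n-k-2]$, which gives $s^{(p)}_n=\sum_{k=0}^{p}s^{(p)}_{n-k-1}$ immediately, the convention $s^{(p)}_0=0$ absorbing the degenerate case $I=[1,n-1]$. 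Your argument instead encodes subsets as binary words of length $n$ starting with $1$ and avoiding the factor $1^{p+1}$, computes $S(t)=(t-t^{p+1})/(1-2t+t^{p+2})$ by the maximal-run decomposition, and reads the recursion off the cleared denominator $1-t-\cdots-t^{p+1}$. What your approach buys is uniformity: both cases of the formula, including $s^{(p)}_n=2^{n-1}$ for $n\le p$, fall out of the single identity $(1-t-\cdots-t^{p+1})S(t)=t+\cdots+t^p$, rather than the first case being handled by a separate observation. What it costs is more bookkeeping: the run decomposition, the rational-function simplification, the cancellation of the invertible factor $1-t$, and the boundary conventions $s^{(p)}_j=0$ for $j\le 0$ all need to be checked, whereas the paper's bijection is a two-line argument. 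The two are closely related: the paper's top-run stripping is exactly the combinatorial shadow of your denominator, and the first-gap decomposition you mention at the end is its mirror image.
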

\begin{proof}
The result is immediate for $n \leq p$. For $n>p$, there is a  bijection between $p$-extended peak sets on $n$ elements and the union of $p$-extended peak sets on $n-1-k$ elements for $k \in [0,p]$. Indeed, given a set $I \subseteq_p [n-1]$, define $k \leq p$ as the integer such that $[n-k, n-1]$ is the maximum sequence of consecutive integers in $I$ containing $n-1$. If $n-1 \notin I$ we define $k=0$ and assume $[n, n-1] = \emptyset$. We map $I$ to the unique element $J \subseteq_p [n-k-2]$ such that $I = J \cup [n-k, n-1]$. This mapping is clearly one-to-one and the result follows. 
\end{proof}
\subsection{Extended peak quasisymmetric functions}
\noindent We proceed with the definition the relevant subfamilies of $q$-fundamentals.
\begin{defn}[Extended-peak quasisymmetric functions]
\label{def.extended_peak_qsym}
Let $n, p \in \PP$ and denote $\rho_p$ the root of unity $\rho_p = e^{-i\pi (p-1)/(p+1)}$. We have $\rho_1 = 1$, $\rho_2 = e^{-i\pi/3}$, $\rho_4 = e^{-i\pi/2},\dots$. Note that $(-\rho_p)$ is a primitive $p+1$-th root of unity, i.e., $(-\rho_p)^{p+1}=1$ but $(-\rho_p)^j \neq 1$ for $1\leq j < p+1$. Given a subset $I\subseteq [n-1]$ we define the $p$-extended peak quasisymmetric function indexed by $I$
\begin{equation}
L^p_{n, I} = L^{(\rho_p)}_{n, I}. 
\end{equation}
Denote $\mathcal{P}^p \subseteq \QSym$ the subalgebra of $\QSym$ spanned by $(L^p_{n, I})_{n\geq0, I\subseteq[n-1]}$ and $\mathcal{P}^p_n \subseteq \QSym_n$ its subspace composed of quasisymmetric functions of degree $n$ (i.e the vector space spanned by $(L^p_{n, I})_{I\subseteq[n-1]}$). We call $\mathcal{P}^p$ the \emph{algebra of $p$-extended peaks}. 
\end{defn}
Definition \ref{def.extended_peak_qsym} gives extended peak functions over all subsets. However, we know from Proposition \ref{thm : basis} that they do not span $\QSym$. As a result, for all $p \in \PP$, the family $(L^p_{n, I})_{n\geq0, I\subseteq[n-1]}$ is not linearly independent  and some indices are redundant. We characterise these set indices. First, for $n, p \in \PP$, if a set $I$ is not a $p$-extended peak set, then $L^p_{n, I}$ may be expressed in terms of other $p$-extended peak quasisymmetric functions.
\begin{theorem}[Extended peak functions over sets that are not $p$-extended peaks] 
\label{thm.non_valid_sets}
Let $n, p \in \PP$ with $n \geq p+1$, $i$ be an integer such that $0 \leq i \leq n - 1 -p$ and $J \subseteq [n-1]$ be a subset that satisfies $[i+1,i+p+1] \cap J = \emptyset$ and $i \in J \cup \{0\}$. Then, the set $ [i+1,i+p] \cup J \nsubseteq_p [n-1]$ as it contains either a sequence of $p+1$ consecutive elements or the sequence $[1,p]$. Notice further that any set that is not a $p$-extended peak set may be written as such. We have the following equality.
\begin{equation*}
\sum_{I \subseteq [i+1, i+p]}(-1)^{|I|}L^p_{n, I \cup J} = 0.
\end{equation*}
\end{theorem}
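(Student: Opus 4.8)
The plan is to expand every $L^p_{n,I\cup J}$ appearing in the sum using Equation~(\ref{eq : LEq}) in terms of the enriched $q$-monomial basis $\eta^{(q)}_{n,-}$ with $q=\rho_p$, collect terms, and show that the coefficient of each $\eta^{(\rho_p)}_{n,H}$ vanishes. Concretely, I would write
\begin{equation*}
\sum_{I \subseteq [i+1,i+p]}(-1)^{|I|}L^p_{n,I\cup J}
=\sum_{I \subseteq [i+1,i+p]}(-1)^{|I|}\!\!\!\sum_{\substack{A\subseteq I\cup J\\ B\subseteq\Peak(I\cup J)\\ A\cap B=\emptyset}}\!\!(-\rho_p)^{|B|}(\rho_p-1)^{|A|}\,\eta^{(\rho_p)}_{n,\,A\cup(B-1)\cup B},
\end{equation*}
and then swap the order of summation so that $I$ runs on the inside. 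The key structural observation I expect to need is how $\Peak(I\cup J)=(I\cup J)\setminus((I\cup J)-1)\setminus\{1\}$ interacts with the window $W=[i+1,i+p]$: since $J$ contains none of $[i+1,i+p+1]$ but $i\in J\cup\{0\}$, the elements of $I\subseteq W$ sit in a ``block'' flanked by $i$ (in $J\cup\{0\}$) on the left and by a gap on the right, so the peaks of $I\cup J$ that lie in $W\cup\{i+1\}$ are governed purely by which elements of $W$ are present in $I$ — independently of $J$ — while peaks of $I\cup J$ outside that region depend only on $J$. This lets me factor the inner sum over $I$.

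After the factorization, the claim reduces to showing that a certain signed sum over subsets $I$ of the $p$-element window $W=[i+1,i+p]$ vanishes. For a fixed target monomial index $H$ and fixed choices of the ``$J$-part'' data, the contribution is a polynomial in $\rho_p$ obtained by summing $(-1)^{|I|}(-\rho_p)^{|B\cap(\text{window stuff})|}(\rho_p-1)^{|A\cap W|}$ over the relevant $I$; because $W$ has exactly $p$ elements and consecutive elements of $I$ force peaks one step to their right (shrinking the available $A$), each such inner sum telescopes into something proportional to $1+(-\rho_p)+(-\rho_p)^2+\cdots+(-\rho_p)^{p}$, which is $\dfrac{1-(-\rho_p)^{p+1}}{1-(-\rho_p)}=0$ since $-\rho_p$ is a primitive $(p+1)$-th root of unity. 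That is where the choice $q=\rho_p$, and nothing else, is used: the geometric-series length $p+1$ is exactly matched to the order of the root of unity. I would make the telescoping precise by the substitution ``record the largest index of $W$ not in $I$'' (or, if $I\supseteq W$, record that), which partitions the $2^p$ subsets of $W$ into $p+1$ classes contributing $(-\rho_p)^0,(-\rho_p)^1,\dots,(-\rho_p)^p$ respectively, each class summing to the same nonzero constant; alternatively one can induct on $p$, peeling off whether $i+p\in I$.

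The main obstacle I anticipate is bookkeeping rather than ideas: getting the peak-set combinatorics exactly right at the two boundaries of the window. On the left, the hypothesis $i\in J\cup\{0\}$ is precisely what guarantees that $i+1$ is \emph{not} a peak of $I\cup J$ whenever $i+1\in I$ (it has a predecessor $i$ in the set, and if $i=0$ then $i+1=1$ is excluded from peaks by definition), so the window contributes no ``spurious'' peak at its left end; I must check this feeds correctly into which $(B-1)\cup B$ pairs arise. On the right, $i+p+1\notin J$ ensures that if $i+p\in I$ then $i+p$ may or may not be a peak depending only on $I$, and $i+p+1$ is never a peak coming from the window. I would isolate these two facts as a short lemma (``for $I\subseteq W$ and $J$ as in the theorem, $\Peak(I\cup J)=\Peak_W(I)\sqcup\Peak_J(J)$ where $\Peak_W(I)$ depends only on $I$'') and then the rest is the root-of-unity cancellation above. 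Finally I should remark why ``any set that is not a $p$-extended peak set may be written as $[i+1,i+p]\cup J$'': take the \emph{leftmost} run of $p+1$ consecutive elements of $I\cup\{0\}$ (or the run $[1,p]$ when $1\in I$ together with all of $[1,p]$), let $i$ be the element just before it, and let $J$ be the rest of the set — this is routine and can be stated in one line.
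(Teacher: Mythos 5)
Your overall strategy is the same as the paper's: expand each $L^p_{n,I\cup J}$ in the $\eta^{(\rho_p)}$ basis via Equation~(\ref{eq : LEq}), use the hypotheses $i\in J\cup\{0\}$ and $[i+1,i+p+1]\cap J=\emptyset$ to split every (peak-lacunar) subset of $I\cup J$ into a $J$-part independent of $I$ and a window part, swap summations, and reduce everything to $[p+1]_{-\rho_p}=\bigl(1-(-\rho_p)^{p+1}\bigr)/(1+\rho_p)=0$; you also identify correctly which boundary hypothesis kills the peak at $i+1$ and which decouples the right end. The gap is in the one step you handle differently, namely the evaluation of the window sum. Your claimed partition of the $2^p$ subsets of $W=[i+1,i+p]$ into $p+1$ classes, each contributing a common nonzero constant times $(-\rho_p)^j$, is false. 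Take $p=2$, $i=0$, $J=\emptyset$, $n=3$: the coefficient of $\eta^{(q)}_{3,\{1,2\}}$ contributed by $I=\{1,2\}$ is $(q-1)^2$, by $I=\{2\}$ it is $q$, and by $I=\{1\}$ and $I=\emptyset$ it is $0$, so grouping by the largest element of $W$ missing from $I$ gives class totals $(q-1)^2,\ q,\ 0$, which are not of the form $c,\ c(-q),\ c(-q)^2$. The grand total $(q-1)^2+q=1-q+q^2=[3]_{-q}$ is what you want, but your proposed refinement does not produce it, so the geometric series is asserted rather than proved.

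For comparison, the paper closes this step as follows: after swapping sums, the innermost alternating sum $\sum_{U\cup V\subseteq I\subseteq W\setminus (V-1)}(-1)^{|I|}$ vanishes unless $I=W\setminus(V-1)$, leaving $\sum_{V\subseteq_1 W}(-q)^{|V|}(q-1)^{p-2|V|}(-1)^{p-|V|}$; there are $\binom{p-v}{v}$ admissible $V$ of size $v$, and the identity $\sum_k(-1)^k\binom{n-k}{k}(xy)^k(x+y)^{n-2k}=\sum_j x^{n-j}y^j$ of Lemma~\ref{lem.binom} with $x=-q$, $y=1$ turns this into $[p+1]_{-q}$. Your fallback suggestion (induct on $p$ by peeling off $i+p$) can be repaired, but it is a two-term recursion rather than a telescoping: conditioning on whether $i+p$ lies in $V$ gives $S_p=(1-q)S_{p-1}+qS_{p-2}$, which $[p+1]_{-q}$ satisfies. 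Either fix would complete your argument; as written, the key cancellation is not established.
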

Secondly, we can compute explicitly the dimension of $\mathcal{P}^p_n$ for $n, p \in \PP$.
\begin{theorem}[Subspaces dimension]
\label{thm.dimension}
Let $n, p \in \PP$ be two positive integers. The dimension of $\mathcal{P}^p_n$ is equal to $s^{(p)}_n$, the number of $p$-extended peak sets on $n$ elements.
\begin{equation*}
\dim \mathcal{P}^p_n = s^{(p)}_n
\end{equation*}
\end{theorem}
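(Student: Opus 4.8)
The plan is to establish the two inequalities $\dim \mathcal{P}^p_n \leq s^{(p)}_n$ and $\dim \mathcal{P}^p_n \geq s^{(p)}_n$ separately. The upper bound is essentially handed to us by Theorem~\ref{thm.non_valid_sets}: since every subset $I \subseteq [n-1]$ that is \emph{not} a $p$-extended peak set can be written in the form $[i+1,i+p] \cup J$ described there, the relation $\sum_{I' \subseteq [i+1,i+p]} (-1)^{|I'|} L^p_{n, I' \cup J} = 0$ expresses $L^p_{n, [i+1,i+p]\cup J}$ as a linear combination of $L^p_{n, I' \cup J}$ with $I' \subsetneq [i+1,i+p]$. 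The key observation is that each such $I' \cup J$ is ``closer'' to being a $p$-extended peak set than $[i+1,i+p]\cup J$ was; making this precise requires a suitable induction order. First I would fix a statistic on subsets of $[n-1]$ --- for instance, order subsets by the lexicographically largest maximal run of consecutive elements (counting $0$ as present), or simply by the number of pairs $(\ell, \ell+1)$ both in $I \cup \{0\}$ --- and check that in the relation above, every term other than the one we are solving for has strictly smaller statistic. Iterating, we conclude that every $L^p_{n,I}$ lies in the span of $\{L^p_{n, I} : I \subseteq_p [n-1]\}$, hence $\dim \mathcal{P}^p_n \leq s^{(p)}_n$.

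\textbf{The lower bound.} For the reverse inequality we must show that the $s^{(p)}_n$ functions $\{L^p_{n,I} : I \subseteq_p [n-1]\}$ are linearly independent. The natural tool is Equation~\eqref{eq : LEq}, which expands $L^p_{n,I}$ in the enriched $q$-monomial basis $\eta^{(q)}_{n,-}$ at $q = \rho_p$; since $\rho_p \neq -1$, the $\eta^{(\rho_p)}_{n,J}$ for $J \subseteq [n-1]$ form a basis of $\QSym_n$, so it suffices to show that the ``expansion matrix'' $\bigl(\text{coeff. of } \eta^{(\rho_p)}_{n,J} \text{ in } L^p_{n,I}\bigr)_{I \subseteq_p [n-1],\ J \subseteq [n-1]}$ has rank $s^{(p)}_n$. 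I would try to identify, for each $p$-extended peak set $I$, a distinguished target $\eta$-index --- the obvious candidate being $J \cup (K-1) \cup K$ with $J = I$, $K = \emptyset$, i.e.\ the term $\eta^{(\rho_p)}_{n,I}$ itself, which appears with coefficient $(q-1)^{|I|} \neq 0$. One then checks that the map $I \mapsto I$ (as an $\eta$-index) can be used to triangulate: order $\eta$-indices so that $\eta^{(\rho_p)}_{n,I}$ is the ``leading term'' of $L^p_{n,I}$, and verify that for a $p$-extended peak set $I$, no \emph{other} $p$-extended peak set $I'$ contributes to $\eta^{(\rho_p)}_{n,I}$ with a higher-priority relationship. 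A cleaner alternative, if it works, is a counting/duality argument: combine the already-proven upper bound with the relations of Theorem~\ref{thm.non_valid_sets} to argue that the $s^{(p)}_n$ relations indexed by non-$p$-extended-peak sets are themselves independent in the dual, forcing $\dim \mathcal{P}^p_n \geq \binom{\cdot}{\cdot} - (\text{number of relations})$; one must check these relations span all the linear dependencies, not just some.

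\textbf{Main obstacle.} The hard part will be the lower bound --- specifically, proving that the relations of Theorem~\ref{thm.non_valid_sets} generate \emph{all} linear relations among the $L^p_{n,I}$, so that the upper bound is tight. The triangularity approach via Equation~\eqref{eq : LEq} sidesteps this by directly exhibiting independence, but the combinatorics of the index set $J \cup (K-1) \cup K$ interacting with the constraint $K \subseteq \Peak(I) = I \setminus (I-1) \setminus \{1\}$ and the disjointness $J \cap K = \emptyset$ is delicate: one needs a monomial order on $\eta$-indices under which, for every $p$-extended peak set $I$, the ``leading'' $\eta$-term of $L^p_{n,I}$ is unique to $I$ among all $p$-extended peak sets. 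I expect the correct choice is to take $I$ itself (via $K = \emptyset$) as the leading term and to order by, say, reverse-inclusion refined by set size, then verify that if $I' \neq I$ are both $p$-extended peak sets and $\eta^{(\rho_p)}_{n,I}$ appears in $L^p_{n,I'}$, then $I'$ is strictly larger in the order --- this is where the $p$-extended peak condition (bounding runs of consecutive elements) must be used to rule out the problematic overlaps $I = J' \cup (K'-1) \cup K'$. Once this triangularity is nailed down the theorem follows immediately, and it also re-proves the upper bound, so I would ultimately present the argument as a single triangularity statement rather than two separate bounds.
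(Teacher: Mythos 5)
Your upper bound is workable and is essentially how the paper uses Theorem~\ref{thm.non_valid_sets} (in the proof of Theorem~\ref{thm.algebra_extended_peaks}): the statistic ``number of pairs $\ell,\ell+1$ both lying in $I\cup\{0\}$'' strictly decreases when $[i+1,i+p]\cup J$ is replaced by $I'\cup J$ with $I'\subsetneq[i+1,i+p]$, since $[i+1,i+p+1]\cap J=\emptyset$ isolates the block, so the induction closes. The genuine gap is the lower bound, and it is not merely ``delicate'': the triangularity you propose is false as stated. Your candidate leading term of $L^{p}_{n,I}$ is $\eta^{(\rho_p)}_{n,I}$ with coefficient $(\rho_p-1)^{|I|}$, but for $p=1$ one has $\rho_1=1$, so this coefficient vanishes for every nonempty $I$; already in the Stembridge case the surviving ``leading'' term comes from $K=\Peak(I)$, not $K=\emptyset$ (e.g.\ $L^{1}_{4,\{2\}}=\eta^{(1)}_{4,\emptyset}-\eta^{(1)}_{4,\{1,2\}}$). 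Even for $p\ge 2$, the transition matrix is only \emph{block} upper triangular: the terms $(-q)^{|K|}\eta^{(q)}_{n,J\cup(K-1)\cup K}$ with $K\neq\emptyset$ produce row indices that are in general incomparable to $I$ under inclusion and come after $I$ in reverse-lex (see the $-q$ entries below the diagonal in $B_4^{(q)}$), so neither of your proposed orders triangulates the submatrix on $p$-extended peak columns. Any correct triangularity must use the $p$-extended peak condition essentially --- otherwise it would prove all $2^{n-1}$ columns independent, contradicting Theorem~\ref{thm.non_valid_sets} --- and you have not located where it enters. Your fallback duality/counting argument is circular, as you yourself note: it presupposes that the relations of Theorem~\ref{thm.non_valid_sets} exhaust all linear dependencies, which is precisely the lower bound.

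The paper takes a different route that avoids exhibiting a triangular minor altogether: it computes $\dim\ker B_n^{(\rho_p)}$ from the recursive block structure of Lemma~\ref{lem : rec}, analyzing the kernels of the auxiliary matrices $\alpha_k A_{k+1}^{(q)}+\beta_k B_k^{(q)}$ for a coefficient sequence driven by the matrix $\left(\begin{smallmatrix}q-1&q\\1&0\end{smallmatrix}\right)$; the identity $[p+1]_{-\rho_p}=0$ (with $[k]_{-\rho_p}\neq 0$ for $k\le p$) is exactly what terminates this descent after $p+1$ steps and yields $\rank B_n^{(\rho_p)}=\sum_{k=1}^{p+1}\rank B_{n-k}^{(\rho_p)}$ for $n>p$, the same recurrence and initial conditions as $s^{(p)}_n$. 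To salvage your plan you would need to identify the correct ($p$-dependent) leading $\eta$-index and an order making the relevant minor triangular; as written, the proposal does not contain a proof of the lower bound.
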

We postpone the proofs of Theorems \ref{thm.non_valid_sets} and \ref{thm.dimension} respectively to Sections \ref{section.non_valid_sets} and \ref{section.dimension}. Combining them we characterise the subalgebra $\mathcal{P}^p$. 
\begin{theorem}[Basis for the algebra of extended peaks]
\label{thm.algebra_extended_peaks}
Let $p \in \PP$. The family $(L_{n, I}^{p})_{n\geq0, I\subseteq_p[n-1]}$ is a basis of the subalgebra $\mathcal{P}^p$ of $\QSym$.
\end{theorem}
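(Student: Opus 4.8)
The plan is to deduce Theorem \ref{thm.algebra_extended_peaks} by combining the two postponed theorems, so the bulk of the work is bookkeeping rather than new mathematics. First I would fix $p$ and $n$ and work inside the finite-dimensional graded piece $\mathcal{P}^p_n$. By Theorem \ref{thm.dimension}, this space has dimension exactly $s^{(p)}_n$. By the enumerative Proposition counting $p$-extended peak sets, the indexing family $(L^p_{n,I})_{I\subseteq_p[n-1]}$ has exactly $s^{(p)}_n$ members. So it suffices to show this family spans $\mathcal{P}^p_n$; linear independence then follows automatically from the dimension count, and the two facts together give that it is a basis of $\mathcal{P}^p_n$. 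Summing over $n\geq 0$ (with the degenerate convention $\mathcal{P}^p_0$ one-dimensional, spanned by the empty-composition function), and using that $\mathcal{P}^p$ is by definition the span of all the $L^p_{n,I}$ and is closed under multiplication by Proposition \ref{prop : UUU} specialised at $q=\rho_p$, yields the theorem as stated.

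The spanning step is where Theorem \ref{thm.non_valid_sets} does its job, and it should be handled by a downward induction (or a well-founded induction on a suitable complexity statistic) on how far a set $I\subseteq[n-1]$ is from being a $p$-extended peak set. Concretely, if $I\subseteq_p[n-1]$ there is nothing to do. Otherwise $I$ contains a ``bad'' block: either a run of $p+1$ consecutive integers, or the initial run $[1,p]$. Choose such a block and write $I$ in the normal form $[i+1,i+p]\cup J$ prescribed by Theorem \ref{thm.non_valid_sets}, where $J\cap[i+1,i+p+1]=\emptyset$ and $i\in J\cup\{0\}$ — one should check that every non-$p$-extended-peak set admits such a presentation, which the statement of Theorem \ref{thm.non_valid_sets} already asserts. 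The relation $\sum_{I'\subseteq[i+1,i+p]}(-1)^{|I'|}L^p_{n,I'\cup J}=0$ isolates $L^p_{n,[i+1,i+p]\cup J}$ (the top term $I'=[i+1,i+p]$) as a signed sum of $L^p_{n,I'\cup J}$ with $I'\subsetneq[i+1,i+p]$. Each such $I'\cup J$ is ``less bad'' than $I$ for an appropriate measure — e.g.\ it has strictly smaller cardinality, or its leftmost maximal bad block starts strictly later, or some lexicographic combination — so induction expresses each of them, hence $L^p_{n,I}$ itself, as a linear combination of $L^p_{n,K}$ with $K\subseteq_p[n-1]$. Since the full family $(L^p_{n,I})_{I\subseteq[n-1]}$ spans $\mathcal{P}^p_n$ by definition, the $p$-extended-peak-indexed subfamily spans it too.

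The main obstacle I anticipate is making the induction genuinely well-founded: one must be certain that every set $I'\cup J$ appearing on the right-hand side of the reduction relation is strictly smaller than $I$ in whatever partial order is chosen, \emph{and} that reducing one bad block does not create a worse one elsewhere. Cardinality alone works cleanly here because each $I'\subsetneq[i+1,i+p]$ has $|I'\cup J|<|I|$, so an induction on $|I|$ (for fixed $n$) terminates; the subtlety is only that the resulting sets $I'\cup J$ need not themselves be $p$-extended peak sets, which is fine — the inductive hypothesis applies to all sets of smaller cardinality, valid or not. A secondary point worth spelling out is the base-case / small-$n$ regime $n\leq p$, where every subset of $[n-1]$ is a $p$-extended peak set, so the family is the whole $q$-fundamental family at a non-problematic specialisation and the claim is immediate from the dimension theorem. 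Once these are in place, the assembly into the graded-algebra statement is routine, and one only needs to remark that closure under product is inherited from Proposition \ref{prop : UUU} so that $\mathcal{P}^p$ is indeed a subalgebra and the union of the bases of the $\mathcal{P}^p_n$ is a basis of it.
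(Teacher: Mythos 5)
Your proposal is correct and follows essentially the same route as the paper: closure under multiplication from the product rule, spanning by the $p$-extended-peak-indexed subfamily via Theorem \ref{thm.non_valid_sets}, and linear independence from the dimension count of Theorem \ref{thm.dimension}. The only difference is that you spell out the well-founded induction (on $|I|$, using the normal form $[i+1,i+p]\cup J$) that makes the spanning step rigorous, which the paper leaves implicit.
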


\begin{proof}
Fix $p \in \PP$. As $p$-extended peak quasisymmetric functions are special cases of $q$-fundamentals, the stability by multiplication is actually a direct consequence of Equation (\ref{eq : UU}). Then Theorem \ref{thm.non_valid_sets} shows that only $p$-extended peak quasisymmetric functions indexed by $p$-extended peak sets may be linearly independent. Finally, Theorem \ref{thm.dimension} shows that for all $n \in \PP$ the dimension of the finite vector space containing homogenous quasisymmetric functions of degree $n$ is exactly the number of $p$-extended peak sets on $n$ elements. 
\end{proof}

\section{Proofs of Theorems \ref{thm.non_valid_sets} and \ref{thm.dimension}}
\subsection{Extended peak functions indexed by generic sets}
\label{section.non_valid_sets}
In order to show Theorem \ref{thm.non_valid_sets} compute
\begin{equation*}
\sum_{I \subseteq [i+1, i+p]}(-1)^{|I|}L^{(q)}_{I \cup J}
\end{equation*}
using Equation (\ref{eq : LEq}) for integers $i, n, p$ and set $J$ satisfying the conditions of the theorem. Note that for $I \subseteq [i+1, i+p]$, $i+1 \notin \Peak(I \cup J)$ (either $i+1 = 1$ or $i \in J$) and that $\Peak(I \cup J) \cap J = \Peak(J)$ irrelevant of the choice of $I$ (as $i+p+1 \notin J$). As a result, we can decompose in Equation (\ref{eq : LEq}) any subset or peak lacunar subset (i.e. $1$-extended peak subset) of $I\cup J$ as a (peak lacunar) subset of $I$ and a (peak lacunar) subset of $J$. Namely,
\begin{align*}
&\sum_{I \subseteq [i+1, i+p]}(-1)^{|I|}L^{(q)}_{I \cup J} \\
&=  \sum_{\substack{U' \subseteq J\\ V' \subseteq \Peak(J) \\ U'\cap V' = \emptyset}}\!\!\!\!\!\!\!(-q)^{|V'|}(q-1)^{|U'|}\!\!\!\!\!\sum_{I \subseteq [i+1, i+p]}(-1)^{|I|}\!\!\!\!\!\!\!\!\!\!\!\!\sum_{\substack{U \subseteq I\\ V \subseteq \Peak(I)\setminus \{i+1\} \\ U\cap V = \emptyset}}\!\!\!\!\!\!\!\!\!\!\!(-q)^{|V|}(q-1)^{|U|}\eta^{(q)}_{U' \cup V'-1 \cup V' \cup U \cup V-1 \cup V}.
\end{align*}
Next, invert summation indices in the last sums.
\begin{align*}
&\sum_{I \subseteq [i+1, i+p]}(-1)^{|I|}L^{(q)}_{I \cup J} \\
&=  \sum_{\substack{U' \subseteq J\\ V' \subseteq \Peak(J) \\ U'\cap V' = \emptyset}}\!\!\!\!\!\!\!(-q)^{|V'|}(q-1)^{|U'|}\!\!\!\!\!\sum_{\substack{U \subseteq [i, i+p]\\ V \subseteq_1 [i+1, i+p] \\ U\cap V = \emptyset\\ U\cap V-1 = \emptyset}}\!\!\!\!\!\!\!\!(-q)^{|V|}(q-1)^{|U|}\eta^{(q)}_{U' \cup V'-1 \cup V' \cup U \cup V-1 \cup V}\!\!\!\!\!\!\!\!\!\!\!\!\!\!\!\!\!
\sum_{U \cup V \subseteq I \subseteq [i+1, i+p] \setminus V-1}\!\!\!\!\!\!\!\!\!\!\!\!\!\!\!\!\!(-1)^{|I|}.
\end{align*}
The last sum is obviously $0$ except when $U \cup V = [i+1, i+p] \setminus V-1$. As a result, 
\begin{align*}
&\sum_{I \subseteq [i+1, i+p]}(-1)^{|I|}L^{(q)}_{I \cup J} \\
&=  \sum_{\substack{U' \subseteq J\\ V' \subseteq \Peak(J) \\ U'\cap V' = \emptyset}}\!\!\!\!\!\!\!(-q)^{|V'|}(q-1)^{|U'|} \eta^{(q)}_{U' \cup V'-1 \cup V' \cup [i,i+p]}\sum_{V \subseteq_1 [i+1, i+p]}\!\!\!\!\!\!\!\!(-q)^{|V|}(q-1)^{p - 2|V|}(-1)^{p - |V|}.
\end{align*}
The summands in the sum over all $1$-extended peak sets $V \subseteq_1 [i+1, i+p]$ depend only on the cardinality of $V$. It easy to show (left to the reader) that
\begin{equation*}
|\{V\subseteq_1 [i+1, i+p], |V| = v\}| = \binom{p - v}{v}.
\end{equation*}
Subsequently,
\begin{align*}
&\sum_{I \subseteq [i+1, i+p]}(-1)^{|I|}L^{(q)}_{I \cup J} \\
&= (-1)^{p}\!\!\!\!\!\!\!\!\! \sum_{\substack{U' \subseteq J\\ V' \subseteq \Peak(J) \\ U'\cap V' = \emptyset}}\!\!\!\!\!\!\!(-q)^{|V'|}(q-1)^{|U'|} \eta^{(q)}_{U' \cup V'-1 \cup V' \cup [i+1,i+p]}\sum_{v = 0}^p(-1)^v\binom{p-v}{v}(-q)^{v}(q-1)^{p - 2v}.
\end{align*}
We use the following lemma.
\begin{lem}[\cite{Sur04}]
\label{lem.binom}
Let $n \in \PP$ and $x,y \in \CC$, one has
\begin{equation*}
\sum_{k = 0}^n(-1)^k\binom{n-k}{k}(xy)^k(x+y)^{n-2k} = \sum_{j=0}^n x^{n-j}y^j.
\end{equation*}
\end{lem}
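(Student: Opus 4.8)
The plan is to recognize both sides of the claimed identity as the complete homogeneous symmetric polynomial $h_n(x,y) = \sum_{j=0}^n x^{n-j}y^j$ in the two variables $x,y$, and to read off the two expressions from a single generating function. This turns the identity into a routine coefficient extraction, with no case analysis required.

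First I would observe that the right-hand side is exactly $h_n(x,y)$, and recall the classical generating function
\begin{equation*}
\sum_{n \geq 0} h_n(x,y)\, t^n = \frac{1}{(1-xt)(1-yt)} = \frac{1}{1 - (x+y)t + xy\, t^2},
\end{equation*}
viewed as an identity in $\CC[x,y][[t]]$. Setting $u = (x+y)t - xy\,t^2$, which has no constant term in $t$, I expand the right-hand side as the geometric series $\sum_{m \geq 0} u^m$ and apply the binomial theorem to each power:
\begin{equation*}
u^m = \bigl((x+y)t - xy\,t^2\bigr)^m = \sum_{k=0}^m \binom{m}{k}(x+y)^{m-k}(-xy)^k\, t^{m+k}.
\end{equation*}

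Next I would extract the coefficient of $t^n$. The exponent of $t$ is $m+k$, so a given pair contributes to $t^n$ precisely when $m = n-k$; the constraint $0 \le k \le m$ then becomes $0 \le k \le \lfloor n/2 \rfloor$, and the coefficient is $\binom{n-k}{k}(x+y)^{n-2k}(-xy)^k$. Summing over $k$ yields
\begin{equation*}
h_n(x,y) = \sum_{k=0}^{\lfloor n/2 \rfloor} (-1)^k \binom{n-k}{k}(xy)^k(x+y)^{n-2k},
\end{equation*}
which is the left-hand side of the lemma (the omitted terms with $k > \lfloor n/2 \rfloor$ vanish since $\binom{n-k}{k}=0$ there, so the stated upper limit $n$ causes no discrepancy). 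Equating the two readings of $h_n(x,y)$ establishes the identity.

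The only real care needed is the bookkeeping in the reindexing $m = n-k$ and in confirming the range of $k$; there is no genuine obstacle. If one prefers to avoid generating functions, the same result follows by induction on $n$: the base cases $n=0,1$ are immediate, the right-hand side satisfies the recurrence $f_n = (x+y)f_{n-1} - xy\, f_{n-2}$ because $x^{n+1}$ and $y^{n+1}$ are the two roots of $z^2 - (x+y)z + xy = 0$, and the left-hand side satisfies the same recurrence via Pascal's rule $\binom{n-k}{k} = \binom{n-1-k}{k} + \binom{n-1-k}{k-1}$.
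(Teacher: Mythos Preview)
Your generating-function argument is correct and complete: the identity $\sum_{n\ge 0} h_n(x,y)\,t^n = \dfrac{1}{1-(x+y)t+xy\,t^2}$ followed by the geometric-series expansion and coefficient extraction yields exactly the left-hand side, and the handling of the summation range is fine.

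The paper itself does not prove this lemma; it simply quotes it from the reference \cite{Sur04}. So there is no ``paper's proof'' to compare against, and your self-contained argument actually supplies more than the paper does.

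One small slip in your alternative induction sketch: the roots of $z^2-(x+y)z+xy=0$ are $x$ and $y$, not $x^{n+1}$ and $y^{n+1}$. The intended reasoning is that since $x$ and $y$ are the roots, the sequences $(x^n)_n$ and $(y^n)_n$ each satisfy the linear recurrence $a_n=(x+y)a_{n-1}-xy\,a_{n-2}$, hence so does their combination $f_n=\sum_{j=0}^n x^{n-j}y^j = (x^{n+1}-y^{n+1})/(x-y)$. This does not affect the validity of your main proof.
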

\noindent Denote for $n \in \PP, c \in \CC$, $[n]_c = (1-c^n)/(1-c)$. As a direct consequence of Lemma \ref{lem.binom},
\begin{align*}
\sum_{I \subseteq [i+1, i+p]}(-1)^{|I|}L^{(q)}_{I \cup J} &=  \sum_{\substack{U' \subseteq J\\ V' \subseteq \Peak(J) \\ U'\cap V' = \emptyset}}\!\!\!\!\!\!\!(-q)^{|V'|}(q-1)^{|U'|} \eta^{(q)}_{U' \cup V'-1 \cup V' \cup [i+1,i+p]}\sum_{t= 0}^{p}(-q)^{p-t},\\
&=[p+1]_{-q}\sum_{\substack{U' \subseteq J\\ V' \subseteq \Peak(J) \\ U'\cap V' = \emptyset}}\!\!\!\!\!\!\!(-q)^{|V'|}(q-1)^{|U'|} \eta^{(q)}_{U' \cup V'-1 \cup V' \cup [i+1,i+p]}.
\end{align*}
End the proof with 
\begin{equation*}
[p+1]_{-\rho_p} = \frac{1 - (-\rho_p)^{p+1}}{1+\rho_p} = 0.
\end{equation*}
\subsection{Finite subspaces dimension}
\label{section.dimension}
For $n \in \PP$ and $q \in \CC$, denote $B_n^{(q)}$ the transition matrix between $(L_{n, I}^{(q)})_{I \subseteq [n-1]}$ and $(\eta^{(q)}_{n, J})_{J \subseteq [n-1]}$ with coefficients given by Equation (\ref{eq : LEq}). Columns and rows are indexed by subsets $I$ of $[n-1]$ sorted in reverse lexicographic order. A subset $I$ is before subset $J$ iff the word obtained by writing the elements of $I$ in decreasing order is before the word obtained from $J$ for the lexicographic order. The column indexed by the subset $I$ corresponds to $L_{n, I}^{(q)}$ and the row indexed by $J$ to $\eta^{(q)}_{n, J}$ (as a direct consequence $B_n^{(q)}$ is the transpose of the similar matrix defined in \cite{GriVas22}). For $n=0$, assume $B_0^{(q)}$ to be the empty matrix. 
\begin{example} 
For $n=4$, the transition matrix $B_{4}^{(q)}$ between $(L_{I}^{(q)})_{I \subseteq [3]}$ and $(\eta^{(q)}_{J})_{J \subseteq [3]}$ is given by
\begin{equation*}
B_{4}^{(q)} = 
\begin{array}{c|cccccccc}
 & \emptyset & \{1\} & \{2\} &  \{2, 1\} & \{3\} &  \{3, 1\}& \{3, 2\} &  \{3, 2, 1\}\\
 \hline
 \emptyset & 1 & 1 & 1 & 1 & 1 & 1 & 1 & 1\\
\{1\} & 0 & q-1 & 0 & q-1 & 0 & q-1 & 0 & q-1\\
\{2\} & 0 & 0 & q-1 & q-1 & 0 & 0 & q-1 & q-1\\
\{2, 1\} & 0 & 0 & -q & (q-1)^2 & 0 & 0 & -q & (q-1)^2\\
\{3\} & 0 & 0 & 0 & 0 & q-1 & q-1 & q-1 & q-1\\
\{3, 1\} & 0 & 0 & 0 & 0 & 0 & (q-1)^2 & 0 & (q-1)^2 \\
\{3, 2\} & 0 & 0 & 0 & 0 & -q & -q & (q-1)^2 & (q-1)^2\\
\{3,2,1\} & 0 & 0 & 0 & 0 & 0 & -q(q-1) & -q(q-1) & (q-1)^3\\
\end{array}
\end{equation*}
\end{example}
Our goal is to compute the dimension of the kernel of $B_n^{(q)}$ to get the dimension of the vector subspace $\mathcal{P}^p_n$ as $$\dim \mathcal{P}^p_n = \rank(B_n^{(q)}) = 2^{n-1} - \dim \ker B_n^{(q)}.$$
We show the following proposition.
\begin{proposition}
\label{prop.kernel}
Let $n, p \in \PP$ be two positive integers. We have
\begin{equation*}
\dim \ker B_n^{(\rho_p)} = \begin{cases}\sum_{k =1}^{p+1} \dim \ker B_{n-k}^{(\rho_p)} + [n > p+1]2^{n-p-2}\;\mbox{ for } n > p,\\
0\;\mbox{ for } n \leq p. \end{cases}
\end{equation*}
\end{proposition}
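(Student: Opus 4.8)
The plan is to establish the recursion for $\dim\ker B_n^{(\rho_p)}$ by relating the matrix $B_n^{(q)}$ to matrices of smaller size, exploiting the block structure that emerges when one separates out whether the large indices $n-1, n-2, \dots$ belong to the index set $I$. Concretely, I would index columns by pairs: write each $I \subseteq [n-1]$ as $I = I' \cup [n-k, n-1]$ where $[n-k,n-1]$ is the maximal block of consecutive integers in $I$ ending at $n-1$ (with $k=0$ if $n-1 \notin I$), so $I' \subseteq [n-k-2]$. The case $n \leq p$ is immediate from Proposition~\ref{thm : basis}: since $\rho_p$ is a root of unity only of order $p+1$, for degrees $n \le p$ the relations forced among the $L^{(q)}_{n,I}$ cannot yet appear, so $B_n^{(\rho_p)}$ has full rank $2^{n-1}$ and trivial kernel. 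More carefully, this follows because $[p+1]_{-\rho_p}=0$ is the only vanishing that Theorem~\ref{thm.non_valid_sets} exploits, and for $n \le p$ no set of the form $[i+1,i+p]\cup J$ fits inside $[n-1]$.

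For the main recursion with $n > p$, I would split $\ker B_n^{(\rho_p)}$ into two contributions matching the two terms on the right-hand side. The first term $\sum_{k=1}^{p+1}\dim\ker B_{n-k}^{(\rho_p)}$ should arise from the relations of Theorem~\ref{thm.non_valid_sets} themselves: each non-$p$-extended-peak set, written canonically as $[i+1,i+p]\cup J$, gives a vector in the kernel, and grouping these by the structure of $J$ near the top of $[n-1]$ produces copies of the kernels $\ker B_{n-k}^{(\rho_p)}$ for $k=1,\dots,p+1$ (roughly: $k$ records how the forbidden block interacts with the top of the ground set, and what remains below is a smaller instance of the same problem). The extra term $[n>p+1]\,2^{n-p-2}$ should come from a ``fresh'' batch of relations that are not reducible to lower-degree ones — these are the relations where the forbidden block $[i+1,i+p]$ sits at the very top, $i = n-1-p$, forcing $n-1 \in [i+1,i+p]$, and the $2^{n-p-2}$ counts the admissible choices of $J \subseteq [n-1]$ below position $i$, i.e.\ $J \subseteq [i-1] = [n-p-2]$ with $i = n-p-1 \in J \cup \{0\}$, which (after accounting for whether $i \in J$) gives $2^{n-p-2}$ such $J$ when $n - p - 2 \ge 0$, i.e.\ $n > p+1$, and none when $n = p+1$.

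To turn this into a clean argument I would pass to a column-echelon / triangularity statement. Using the reverse-lexicographic order on subsets and Equation~(\ref{eq : LEq}), the matrix $B_n^{(q)}$ has a recursive block form: partitioning columns and rows by the value of $n-1 \in I$ vs.\ $n-1 \notin I$ (and iterating), one gets a block-triangular decomposition whose diagonal blocks are (up to the scalars $q-1$, $-q$, $(q-1)^2$, etc.\ visible in the $n=4$ example) copies of $B_m^{(q)}$ for smaller $m$. At $q = \rho_p$, the identity $[p+1]_{-\rho_p}=0$ causes exactly one family of diagonal relations to collapse, and tracking the rank drop in each block yields the stated recursion. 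The key bookkeeping device is the bijection from the proof of the proposition counting $s_n^{(p)}$: since $s_n^{(p)}$ satisfies the same recursion $s_n^{(p)} = \sum_{k=1}^{p+1} s_{n-k}^{(p)}$ and $2^{n-1} = \sum_{k=1}^{p+1} 2^{n-1-k} + [n>p+1]2^{n-p-2} + (\text{boundary terms handled by the } n\le p \text{ base case})$, proving Proposition~\ref{prop.kernel} is equivalent to Theorem~\ref{thm.dimension}; I would actually prove them together by induction on $n$, using $\dim\ker B_n^{(\rho_p)} = 2^{n-1} - \dim\mathcal{P}^p_n$ and checking the two recursions match term by term.

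The hard part will be making the block-triangularity precise enough to read off the rank of each diagonal block without hand-waving — in particular, verifying that at $q=\rho_p$ the only rank deficiency is the expected one and that the ``new'' relations at the top are genuinely independent of the inherited ones (so that the dimensions add rather than overlap). This amounts to showing that the relations from Theorem~\ref{thm.non_valid_sets}, restricted to each degree $n$, are linearly independent and exhaust the kernel; independence is plausible from the echelon structure (distinct leading terms $\eta^{(q)}_{n,\cdot}$) but needs a careful choice of ordering, and exhaustiveness is exactly the dimension count we are trying to prove, so the induction must be set up so that the inductive hypothesis supplies the lower-degree part while the explicit combinatorial identity supplies the top part.
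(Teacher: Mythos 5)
Your proposal correctly identifies the objects in play (the block structure of $B_n^{(q)}$, the role of $[p+1]_{-\rho_p}=0$, and the combinatorial source of the two terms in the recursion), but it is missing the one idea the proof actually requires: a mechanism for the \emph{upper} bound on $\dim \ker B_n^{(\rho_p)}$, equivalently a lower bound on the rank. The relations of Theorem \ref{thm.non_valid_sets} only supply kernel vectors, hence only the inequality $\dim\ker B_n^{(\rho_p)}\geq$ (right-hand side), and both of your candidate routes to the matching upper bound break down. The ``distinct leading terms $\eta^{(q)}_{n,\cdot}$'' echelon argument fails because $B_n^{(q)}$ is only \emph{block} upper triangular: the diagonal blocks $A_k^{(\rho_p)}$ are themselves singular (already for $p=1$ the block $A_2^{(1)}$ is the $1\times1$ matrix $(q-1)\vert_{q=1}=(0)$), and the rank of a block-triangular matrix with singular diagonal blocks is not the sum of the ranks of those blocks --- a strictly upper triangular $2\times2$ matrix has rank $1$ even though both diagonal entries vanish, so ``tracking the rank drop in each block'' undercounts. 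Likewise, the proposed joint induction does not close: knowing $\dim\ker B_{n-k}^{(\rho_p)}$ for $k\geq1$ does not by itself yield the claimed equality for $\dim\ker B_n^{(\rho_p)}$, because lifting those kernels through the block decomposition and proving that nothing else lies in the kernel is precisely the computation that is absent. The paper's proof supplies it with a device your sketch does not have: it analyses $\ker\bigl(\alpha A_{n+1}^{(q)}+\beta B_n^{(q)}\bigr)$ for a sequence of coefficient pairs $(\alpha_k,\beta_k)$ defined by a $2\times2$ linear recurrence, solves that recurrence in terms of $[k]_{-q}$, and observes that $(q-1)\alpha_k+\beta_k=(-1)^k[k+1]_{-q}$; since $[k]_{-\rho_p}\neq0$ for $1\leq k\leq p$ while $[p+1]_{-\rho_p}=0$, the kernel computation cascades through exactly $p$ nondegenerate steps (Equation (\ref{equation.kernel_recurrence.case_1})) before degenerating (Equation (\ref{equation.kernel_recurrence.case_2})), where the condition on $X^2$ disappears and the free block of dimension $2^{n-p-2}$ emerges. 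Without this, or an equivalent exact rank computation, your argument proves only one inequality.

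A secondary gap is the base case $n\leq p$: it does not follow from Proposition \ref{thm : basis}, nor from the observation that no forbidden block $[i+1,i+p]$ fits inside $[n-1]$ --- the absence of that particular family of relations does not exclude other linear dependencies. The paper instead invokes the invertibility of $B_n^{(\rho_p)}$ for $n\leq p$ established in \cite{GriVas22}, which again ultimately rests on the non-vanishing of $[k]_{-\rho_p}$ for $k\leq p$. (Your count of the ``fresh'' relations at $n=p+1$ is also off: Theorem \ref{thm.non_valid_sets} with $i=0$ and $J=\emptyset$ does produce one relation in that degree, consistent with $\rank B_{p+1}^{(\rho_p)}=2^p-1=s^{(p)}_{p+1}$, so the boundary bookkeeping needs more care than the sketch provides.)
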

\begin{proof}
The second case is a direct consequence of the fact that the matrix $B_n^{(\rho_p)}$ is invertible for $n\leq p$ (see \cite{GriVas22}).
To show the general recurrence, assume that $n>p$. As in \cite{GriVas22}, notice that
the matrix $B_n^{(q)}$ is block upper triangular. For each $k \in [n]$, let $A_k^{(q)}$ denote the transition matrix from $(L_{n, I}^{(q)})_{I \subseteq [n-1],~  \max(I) = k-1}$ to $(\eta^{(q)}_{n, J})_{J \subseteq [n-1],~\max(J) = k-1}$ (where $\max\varnothing := 0$); this actually does not depend on $n$. Note that $A_k^{(q)}$ is a $2^{k-2} \times 2^{k-2}$-matrix if $k \geq 2$, whereas $A_1^{(q)}$ is a $1 \times 1$-matrix.
We have
\begin{equation*}
B_{n}^{(q)} = 
\begin{pmatrix}
 A_1^{(q)}& *& *& \hdots& *\\
 0&A_2^{(q)}& *& \hdots& *\\
 0&0&A_3^{(q)}&\hdots& *\\
 0&0&0&\ddots&*\\
 0&0&0&0&A_n^{(q)}\\
 \end{pmatrix}.
\end{equation*}
We have the following lemma:
\begin{lem}
\label{lem : rec}
The matrices $\left(B_n^{(q)}\right)_n$ and $\left (A_n^{(q)}\right)_n$ satisfy the following recurrence relations (for $n \geq 1$ and $n \geq 2$, respectively):
\begin{equation*}
B_{n}^{(q)} = 
\begin{pmatrix}
 B_{n-1}^{(q)}&B_{n-1}^{(q)}\\
0 &A_n^{(q)}\\
 \end{pmatrix},
 \qquad
A_{n}^{(q)} = 
\begin{pmatrix}
 (q-1)B_{n-2}^{(q)}& (q-1)B_{n-2}^{(q)}\\
-qB_{n-2}^{(q)}&(q-1)A_{n-1}^{(q)}\\
 \end{pmatrix}.
\end{equation*}
\end{lem}
\noindent Consider for $n \geq 2$ and coefficients $\alpha, \beta \in \CC$ the kernel of the matrix $\alpha A_{n+1}^{(q)} + \beta B_n^{(q)}$.
Let $X$ be a $2^{n-1}$ vector and denote $X^1$ and $X^2$ the two vectors of size $2^{n-2}$ such that
$
X = \begin{pmatrix}
X^1\\
X^2\\
\end{pmatrix}.
$
We compute
\begin{align}
\label{equation.kernel_recurrence}
\left(\alpha A_{n+1}^{(q)} + \beta B_n^{(q)}\right)X = 0 \Leftrightarrow \begin{cases}((q-1)\al+\beta)B_{n-1}^{(q)}(X^1+X^2) = 0\\ -q\al B_{n-1}^{(q)}X^1 + ((q-1)\al+\beta)A_{n}^{(q)}X^2 = 0 \end{cases}
\end{align}
Two cases arise from the previous equation. Either $((q-1)\al+\beta) \neq 0$ and
\begin{align}
\label{equation.kernel_recurrence.case_1}
\left(\alpha A_{n+1}^{(q)} + \beta B_n^{(q)}\right)X = 0 \Leftrightarrow \begin{cases}B_{n-1}^{(q)}(X^1+X^2) = 0\\ \left(((q-1)\al+\beta)A_{n}^{(q)} + q\al B_{n-1}^{(q)}\right)X^2 = 0 \end{cases}
\end{align}
or $((q-1)\al+\beta) = 0$ and 
\begin{align}
\label{equation.kernel_recurrence.case_2}
\left(\alpha A_{n+1}^{(q)} + \beta B_n^{(q)}\right)X = 0 \Leftrightarrow  q\al B_{n-1}^{(q)}X^1 = 0
\end{align}
Consider the sequence of coefficients
\begin{align*}
\left(\alpha_0^{(q)}\;\;\;\; \beta_0^{(q)}\right) &= (0\;\;\;\; 1)\\
\left(\alpha_{n+1}^{(q)}\;\;\;\; \beta_{n+1}^{(q)}\right) &= \left(\alpha_{n}^{(q)}\;\;\;\; \beta_{n}^{(q)}\right)\begin{pmatrix}
q-1&q\\
1&0\\
\end{pmatrix}, \mbox{ for } n \geq 0.
\end{align*}
Solving the recurrence we have for integer $n \geq 1$
\begin{align*}
\left(\alpha_{n}^{(q)}\;\;\;\; \beta_{n}^{(q)}\right) &= (0\;\;\;\; 1){\begin{pmatrix}
q-1&q\\
1&0\\
\end{pmatrix}}^n\\
&= (-1)^n(0\;\;\;\; 1){\begin{pmatrix}
[n+1]_{-q}&-q[n]_{-q}\\
-[n]_{-q}&q[n-1]_{-q}\\
\end{pmatrix}}\\
&=(-1)^n(-[n]_{-q}\;\;\;\; q[n-1]_{-q}),
\end{align*}
where for integer $i$ and complex number $c$ recall that $[i]_c = (1 - c^i)/(1-c)$. Finally notice that for integer $n \geq 1$
\begin{align*}
\alpha_{n}^{(q)} (q-1) + \beta_{n}^{(q)} &= (-1)^n\left([n]_{-q} - q(-q)^{n-1}\right)\\
&=(-1)^n[n+1]_{-q}.
\end{align*}
Get back to the case $q=\rho_p$ for some positive integer $p \in \PP$. As a direct consequence of the defintion of $\rho_p$ in Definition \ref{def.extended_peak_qsym}, we have
\begin{align*}
&[p+1]_{-\rho_p} = 0\\
&[n]_{-\rho_p} \neq 0,\; 1\leq n \leq p
\end{align*}
As a result, if $q=\rho_p$, one may iterate the recurrence in Equation (\ref{equation.kernel_recurrence}) $p$ times with the case of Equation (\ref{equation.kernel_recurrence.case_1}) and one more time to go to the case of Equation (\ref{equation.kernel_recurrence.case_2}). Recall that $\beta_{p+1}^{(\rho_p)} = \rho_p[p]_{-\rho_p} \neq 0$ to conclude the proof.
\end{proof}
Noticing that for $n > p+1$, $2^{n-1} = 2^{n-2} + 2^{n-3} + \dots + 2^{n-p-1} + 2\cdot 2^{n-p-2}$ we can deduce the rank of $B_n^{(\rho_p)}$ using Proposition \ref{prop.kernel}. We get
\begin{equation}
\rank \left (B_n^{(\rho_p)}\right) = 2^{n-1} - \dim \ker B_n^{(\rho_p)} = \begin{cases}\sum_{k =1}^{p+1} \rank\left(B_{n-k}^{(\rho_p)}\right)\;\mbox{ for } n > p,\\
2^{n-1}\;\mbox{ for } 1 \leq n \leq p.\end{cases}
\end{equation}
We conclude that the sequence of subspace dimensions $(\mathcal{P}^p_n)_n$ follows the same recurrence with the same initial conditions as the sequence of the numbers of $p$-extended peak sets $(s_n^p)_n$. Theorem \ref{thm.dimension} follows. 
\printbibliography

\end{document}